\crefname{equation}{}{}
\crefname{algocf}{Algorithm}{Algorithms}
\crefname{equation}{}{} 
\colorlet{refkey}{orange!20}
\colorlet{labelkey}{blue!30}
\crefname{algocf}{Algorithm}{Algorithms}
\numberwithin{equation}{section}
\newtheorem{theorem}{Theorem}[section]
\newtheorem{lemma}[theorem]{Lemma}
\crefname{claim}{Claim}{Claims}
\newtheorem*{question*}{Question}
\theoremstyle{definition}
\newtheorem{definition}[theorem]{Definition}
\newtheorem*{definition*}{Definition}
\newtheorem{example}[theorem]{Example}
\theoremstyle{remark}
\newtheorem*{remark}{Remark}
\newcommand{\norm}[1]{\left\lVert#1\right\rVert}
\newcommand{\snorm}[1]{\lVert#1\rVert}
\newcommand{\sang}[1]{\langle #1 \rangle}
\newcommand{\mb}{\mathbb}
\newcommand{\mc}{\mathcal}
\newcommand{\on}{\operatorname}
\title{On the smoothed analysis of the smallest singular value with discrete noise}
\author[A1]{Vishesh Jain}
\address{Simons Institute for the Theory of Computing,
Berkeley, CA 94720, USA}
\email{visheshj@stanford.edu}
\author[A2]{Ashwin Sah}
\author[A3]{Mehtaab Sawhney}
\address{Department of Mathematics, Massachusetts Institute of Technology, Cambridge, MA 02139, USA}
\email{\{asah,msawhney\}@mit.edu}
\begin{document}
\begin{abstract}
Let $A$ be an $n\times n$ real matrix, and let $M$ be an $n\times n$ random matrix whose entries are i.i.d sub-Gaussian random variables with mean $0$ and variance $1$. We make two contributions to the study of $s_n(A+M)$, the smallest singular value of $A+M$.
\begin{enumerate}
    \item We show that for all $\epsilon \ge 0$,
    $$\mb{P}[s_n(A + M) \le \epsilon] = O(\epsilon \sqrt{n}) + 2e^{-\Omega(n)},$$
    provided only that $A$ has $\Omega (n)$ singular values which are $O(\sqrt{n})$. This extends a well-known result of Rudelson and Vershynin, which requires all singular values of $A$ to be  $O(\sqrt{n})$. 
    \item We show that any bound of the form
    $$\sup_{\snorm{A}\le n^{C_1}}\mb{P}[s_n(A+M)\le n^{-C_3}] \le n^{-C_2}$$
    must have $C_3 = \Omega (C_1 \sqrt{C_2})$. This complements a result of Tao and Vu, who proved such a bound with $C_3 = O(C_1C_2 + C_1 + 1)$, and counters their speculation of possibly taking $C_3 = O(C_1 + C_2)$.  
\end{enumerate}
\end{abstract}
\maketitle
\section{Introduction}\label{sec:introduction}
Recall that for an $n\times n$ matrix $A$, its singular values $s_1(A)\ge \dots \ge s_n(A)$ are defined to be the eigenvalues of $\sqrt{A^{T}A}$ arranged in non-decreasing order. Recall also that the largest and smallest singular values admit the following characterization:
\begin{align*}
    s_1(A) &= \sup_{x\in \mb{S}^{n-1}}\snorm{Ax}_{2} = \snorm{A};\\
    s_n(A) &= \inf_{x\in \mb{S}^{n-1}}\snorm{Ax}_2 = \snorm{A^{-1}}^{-1},
\end{align*}
where $\mb{S}^{n-1}$ denotes the unit sphere in Euclidean space $\mb{R}^{n}$, $\snorm{\cdot}_{2}$ denotes the Euclidean norm in $\mb{R}^{n}$, and $\snorm{\cdot}$ denotes the spectral norm of the matrix.  

The extreme singular values $s_1(A), s_n(A)$, and the condition number $$\kappa(A) := s_1(A)/s_n(A) = \snorm{A}\cdot \snorm{A}^{-1}$$
are especially important in applications. In particular, consider the fundamental problem of solving the linear system $Ax = b$, where $A\in \mb{R}^{n\times n}$ and $b\in \mb{R}^{n}$ are given. Suppose that there is an error $\delta b$ in inputting $b$, which leads to an error $\delta x$ in the output. Then,
\begin{align*}
    \frac{\snorm{\delta x}_{2}/\snorm{x}_2}{\snorm{\delta b}_2/\snorm{b}_2} = \left(\frac{\snorm{A^{-1}(\delta b)}_2}{\snorm{\delta b}_{2}}\right)\left(\frac{\snorm{Ax}_2}{\snorm{x}_2}\right) \le \kappa(A),
\end{align*}
which motivates defining the \emph{loss of precision} \cite{smale1985efficiency} as
$L(A) := \log \kappa(A).$

\subsection{Smoothed analysis}
In their celebrated work on \emph{smoothed analysis} \cite{spielman2004smoothed}, Spielman and Teng sought to understand why algorithms with poor worst-case performance are successful in practice. In the context of solving the linear system $Ax = b$, their program amounts to the following: even if the desired input matrix $A$ is ill-conditioned (i.e, has a large condition number, and hence, large loss of precision), it is likely that the computer will actually work with some small perturbation $A + M$, where $M$ represents the effect of random noise. Therefore, if we can show that for any fixed $A \in \mb{R}^{n\times n}$, the random matrix $A + M$ is well-conditioned (i.e., has small condition number) with high probability, this would help explain why the large condition number of $A$ does not lead to correspondingly bad performance in practice. 

Since the operator norm of a random matrix $M$ with independent entries (with sufficiently light tails) can be controlled using standard concentration techniques (cf. \cite{bai2010spectral,vershynin2018high}) (this immediately implies control on the operator norm of $A+M$ using the triangle inequality), it follows that they key challenge in controlling the condition number of $A+M$ is in understanding the behavior of the smallest singular value of $A+M$. In this direction, Sankar, Spielman, and Teng \cite{sankar2006smoothed} considered the case when the entries of $M$ are i.i.d Gaussian random variables with mean $0$ and variance $1$, and showed that for any $A \in \mb{R}^{n\times n}$ and for any $\epsilon \ge 0$,
\begin{align}
\label{eqn:SST}
    \mb{P}[s_n(A+M)\le \epsilon] \le C\epsilon \sqrt{n},
\end{align}
for $C\sim 2.35$ (recently, the constant $C$ was improved by Banks et al. to the optimal value of $1$ in \cite{banks2019gaussian}). This result is almost best possible, in the sense that when $A = 0$, it was shown by Edelman \cite{edelman1988eigenvalues} that for sufficiently small $\epsilon \ge 0$,
$$\mb{P}[s_n(M) \le \epsilon] \ge c\epsilon \sqrt{n},$$
where $c > 0$ is an absolute constant (in fact, $c$ can be taken arbitrarily close to $1$ by further restricting the range of $\epsilon$). 

\subsection{Smallest singular value of random matrices}
The proof of Sankar, Spielman, and Teng heavily exploited the orthogonal invariance of the Gaussian distribution, and extending their result to other distributions, especially discrete distributions, is very challenging, and has attracted much attention in the past fifteen years. Notably, in a seminal work, Rudelson and Vershynin \cite{RV08} showed that for a random matrix $M$ whose entries are i.i.d copies of a sub-Gaussian random variable with variance $1$ and mean $0$ (recall that a random variable $X$ is said to be sub-Gaussian if its sub-Gaussian norm, defined by $\snorm{X}_{\psi_{2}}:=\inf\{s > 0: \mb{E}[\exp(X^{2}/s^{2})]\le 2\}$, is finite), 
\begin{align}
\label{eqn:RV}
    \mb{P}[s_n(M) \le \epsilon] \le C\epsilon \sqrt{n} + 2\exp(-cn),
\end{align}
 where $C,c > 0$ depend only on the sub-Gaussian norm of the distribution. We note that this result is almost best-possible; the first summand has the optimal dependence on $\epsilon$ and $n$ as explained before, whereas the second summand is necessary to account for the probability that the random matrix is singular (for instance, if each entry of $M$ is independently $\pm 1$ with probability $1/2$ each, then any two given rows are equal with probability $2^{-n}$).  
 
 Since the appearance of \cite{RV08}, considerable work has been devoted to relaxing the assumptions in the work of Rudelson and Vershynin, and establishing a bound of the form \cref{eqn:RV} in greater generality. For instance, it was very recently shown by Livshyts, Tikhomirov, and Vershynin \cite{livshyts2019smallest} (see also the references there for previous work) that \cref{eqn:RV} holds under only the assumption that the entries of $M$ are independent, have mean $0$, have uniformly bounded variance, and are uniformly anti-concentrated (in fact, their result is a bit more general, as we will discuss later). In the setting of smoothed analysis, Tikhomirov \cite{tikhomirov2020invertibility} showed that for any $A \in \mb{R}^{n\times n}$ and for all $\epsilon > 0$,
 \begin{align*}
     \mb{P}[s_n(A+M)\le \epsilon] \le C\epsilon \sqrt{n},
 \end{align*}
if the rows of $M$ are independent \emph{continuous} random vectors satisfying certain technical assumptions (in particular, his result allows the rows of $M$ to be independent centered log-concave isotropic random vectors). We mention that when the entries of $M$ are independent continuous random variables with uniformly bounded densities, a bound of the form \cref{eqn:SST}, with the optimal $\sqrt{n}$ replaced by the sub-optimal $n$, is rather easy to prove (cf. \cite{tikhomirov2020invertibility}); for \emph{symmetric} matrices $M$ whose distributions on and above the diagonal are independent continuous random variables with uniformly bounded densities and \emph{symmetric} matrices $A$, a bound of the form \cref{eqn:SST} was obtained by Farrell and Vershynin \cite{farrell2016smoothed}, except that $\sqrt{n}$ is replaced by the sub-optimal $n^{2}$ (in the special case when $M$ is drawn from the appropriately normalised Gaussian Orthogonal Ensemble, the optimal $\sqrt{n}$ dependence was obtained by Bourgain \cite{bourgain2017problem}).       

\subsection{Smoothed analysis with discrete noise}
Since the noise/randomness encountered in practice (e.g., in digital devices) is typically discrete, it is natural to try to understand analogues of \cref{eqn:SST} when the distribution of the entries of $M$ is allowed to have atoms. This was first considered by Tao and Vu \cite{tao2008random}, who showed that if the entries of $M$ are i.i.d copies of a (possibly complex) random variable with mean $0$ and variance $1$, then for any constants $C_1, C_2 > 0$, there exists a $C_3 > 0$ such that 
\begin{align}
\label{eqn:TV}
\sup_{\snorm{A} \le n^{C_1}}\mb{P}[s_n(A+M) \le n^{-C_3}] \le n^{-C_2}.
\end{align}
Recently, it was shown by Jain \cite{jain2019quantitative} that a bound of this nature continues to hold under the more general condition $\snorm{A} \le \exp(n^{c})$, and for all target probabilities larger than $\exp(-n^{c'})$, where $c,c' > 0$ are small absolute constants. The best-known dependence of $C_3$ on $C_1, C_2$ was obtained in another work by Tao and Vu \cite{TV10}, who showed that
\begin{align}
    \label{eqn:TV-dependence}
    \sup_{\snorm{A} \le n^{C_1}}\mb{P}[s_n(A+M) \le n^{-2(C_2 + 2)C_1 + 1/2}] \le O(n^{-C_2 + o(1)} + \mb{P}[\snorm{M} \ge n^{C_1}]).
\end{align}

Perhaps surprisingly, it turns out that the dependence of the smallest singular value on $C_{1}$ is not an artifact of the proof, and is necessary in general, as the following unpublished example due to Rudelson shows (a similar example appears in independent work of Tao and Vu \cite{TV10}). 

\begin{example}
\label{example:rudelson}
Let $k \in [n]$, and let $A = \on{diag}(L,L,\dots,L,0,0,\dots,0)$, where the first $n-k$ entries are $L$, and the last $k$ entries are $0$. Let the entries of $M$ be independent lazy Rademacher random variables (i.e., each entry independently takes on the value $0$ with probability $1/2$, $-1$ with probability $1/4$, and $1$ with probability $1/4$). Let $(v_1,\dots,v_n) \in \mb{S}^{n-1}$ be orthogonal to the first $n-1$ rows of $A+M$. It is easily seen that $\sqrt{v_{1}^{2}+\dots+v_{n-k}^{2}} = O(\sqrt{n}/L)$. Then, since the last $k$ coordinates of the last row are $0$ with probability $2^{-k}$, it follows that with probability at least $2^{-k}$, the smallest singular value of $A+M$ is at most $O(\sqrt{n}/L)$.  
\end{example}

\subsection{Our results}
Note that \cref{example:rudelson} rules out the possibility of a bound of the form \cref{eqn:RV} for general $A+M$, even when the entries of $M$ are i.i.d sub-Gaussian random variables. On the other hand, such a bound is known to hold (restricting ourselves to the case when $M$ has i.i.d sub-Gaussian entries with mean $0$ and variance $1$) if $\snorm{A} = O(\sqrt{n})$ \cite{RV08}, and more generally, if $\sum_{i=1}^{n}s_i(A)^{2} = O(n^{2})$ \cite{livshyts2019smallest}.

Our first main result shows that when the entries of $M$ are i.i.d sub-Gaussian random variables with mean $0$ and variance $1$, \cref{eqn:RV} continues to hold for a much wider class of $A+M$. In order to state the result, we need the following definition. 

\begin{definition}\label{def:stable-rank}
Let $m\le n \in \mb{N}$ and $K \ge 0$. 
An $m \times n$ matrix $A$ has $K$-rank $r$ if it has exactly $r$ singular values of size at least $K\sqrt{n}$. 
\end{definition}
\begin{theorem}\label{thm:low-stable-rank}
Let $K > 0$ and $\eta \in (0,1)$. Let $\xi$ be a sub-Gaussian random variable with mean $0$ and variance $1$, and let $M$ be an $n\times n$ random matrix, each of whose entries is an independent copy of $\xi$. Then, for any $A \in \mb{R}^{n\times n}$ with $K$-rank at most $(1-\eta)n$, we have
\begin{align}
\label{eqn:our-bound}
\mb{P}[s_n(A+M)\le\epsilon]\le C\epsilon\sqrt{n} + 2e^{-cn},
\end{align}
where $C, c > 0$ depend only on $K, \eta$, and the sub-Gaussian norm of $\xi$.
\end{theorem}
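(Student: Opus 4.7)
The plan is to reduce \cref{thm:low-stable-rank} to the classical Rudelson--Vershynin setting \cref{eqn:RV} by isolating the effect of the large singular values of $A$ via a block decomposition and a Schur complement argument. First, I would note two preliminary reductions. On the high-probability event $\{\snorm{M} \le C_0 \sqrt{n}\}$, which has probability $\ge 1 - 2e^{-cn}$ by standard sub-Gaussian operator-norm bounds, we may assume $\snorm{M} = O(\sqrt{n})$. Also, since the $K$-rank hypothesis is monotone in $K$, I may (at the cost of absorbing factors into the constants $C, c$, which are allowed to depend on $K$) assume that $K \ge K_0$ is large relative to $C_0$.

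Next, write the SVD $A = U \Sigma V^T$, so that $s_n(A+M) = s_n(\Sigma + U^T M V)$. Let $r \le (1-\eta)n$ be the $K$-rank of $A$ and partition
\[
\Sigma = \begin{pmatrix} \Sigma_1 & 0 \\ 0 & \Sigma_2 \end{pmatrix}, \qquad U^T M V = \begin{pmatrix} N_{11} & N_{12} \\ N_{21} & N_{22} \end{pmatrix},
\]
where $\Sigma_1 \in \mb{R}^{r \times r}$ has singular values $\ge K\sqrt{n}$ and $\Sigma_2 \in \mb{R}^{(n-r) \times (n-r)}$ has singular values $< K\sqrt{n}$. If $s_n(A+M) \le \epsilon$, let $(y_1, y_2)$ be a unit minimizer. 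Since $s_{\min}(\Sigma_1) \ge K\sqrt{n}$ while $\snorm{N_{11}}, \snorm{N_{12}} \le \snorm{M} \le C_0\sqrt{n}$, the matrix $\Sigma_1 + N_{11}$ is invertible with $s_{\min} \ge K\sqrt{n}/2$; solving the first block equation $(\Sigma_1 + N_{11}) y_1 + N_{12} y_2 = O(\epsilon)$ yields $\snorm{y_1} = O(1/K)$ and hence $\snorm{y_2} \ge 1/2$. Substituting into the second block equation produces the Schur complement identity
\[
\bigl( \Sigma_2 + N_{22} - N_{21}(\Sigma_1 + N_{11})^{-1} N_{12} \bigr) y_2 = O(\epsilon),
\]
so the $(n-r) \times (n-r)$ matrix $S := \Sigma_2 + N_{22} - N_{21}(\Sigma_1 + N_{11})^{-1} N_{12}$ satisfies $s_{\min}(S) \le O(\epsilon)$. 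Crucially, $n - r \ge \eta n$ and $\snorm{S} \le K\sqrt{n} + \snorm{M} + \snorm{M}^2 / (K\sqrt{n})$, placing $S$ squarely in the Rudelson--Vershynin regime with operator norm of order $\sqrt{n}$.

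The final step is to apply an RV-type invertibility bound to $S$, producing $\mb{P}[s_{\min}(S) \le O(\epsilon)] \le C \epsilon \sqrt{n-r} + 2 e^{-c(n-r)} \le C' \epsilon \sqrt{n} + 2 e^{-c \eta n}$ and closing the argument. The main obstacle lies in this final step. In the Gaussian case, the rotation $U^T M V$ preserves independence, so conditional on $N_{11}, N_{12}, N_{21}$ the matrix $S$ is a fixed matrix of norm $O_K(\sqrt{n})$ plus an independent iid Gaussian block, and classical RV applies directly. For general sub-Gaussian $\xi$ the entries of $U^T M V$ are no longer independent (only uncorrelated), so this conditional application of RV fails. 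To get around this I would work directly in the original basis, never rotating $M$: write $A = A_L + A_S$ with $A_L$ of rank $r$ and $\snorm{A_S} \le K\sqrt{n}$, reinterpret the Schur-complement reduction as a statement about the action of $A_S + M$ on the appropriate $\eta n$-dimensional subspace, and then run the compressible/incompressible sphere decomposition of Rudelson--Vershynin against the iid rows of $M$. Anti-concentration of $\sang{m_i, x}$ is unaffected by the deterministic shift $\sang{a_i, x}$, so the incompressible/LCD half of the argument goes through essentially unchanged. The remaining work is to set up the compressible case in a way that exploits the $K$-rank hypothesis---by using the large block $A_L$ to force $(A+M)x$ to be large for any $x$ with nontrivial component outside the small-singular-value subspace---rather than invoking a global operator-norm bound on $A$, and this is the key new technical input I expect to be needed.
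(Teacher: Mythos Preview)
Your proposal has a genuine gap, and in fact inverts where the difficulty lies. You correctly flag that the compressible case needs input beyond a global operator-norm bound, and your suggestion---force $(A+M)x$ to be large whenever $x$ has nontrivial component in the large-singular-value directions---is essentially what the paper does (project onto the span $V$ of the small left-singular vectors of the row submatrix $B$, where $\snorm{P_V B}\le K\sqrt{n}$, and run the standard argument; see \cref{lem:fixed-vector,lem:compressible-vectors}). This part is routine once set up correctly.

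The real difficulty, which you dismiss with ``the incompressible/LCD half of the argument goes through essentially unchanged,'' is precisely where the paper's main work sits. The incompressible case reduces via \cref{lem:invertibility-distance} to showing that the unit normal $v$ to the span of $n-1$ rows of $A+M$ has exponentially large LCD. In the standard Rudelson--Vershynin proof this is done by covering the level set $S_D$ by a Euclidean net at scale $\sim\mu\sqrt{n}/D$ and union-bounding $\mb{P}[\snorm{(B+N)w}_2\text{ small}]$ over net points $w$; the passage from $v$ to a nearby net point $w$ uses $\snorm{(B+N)(v-w)}_2\le\snorm{B+N}\cdot\snorm{v-w}_2$, which requires $\snorm{B}=O(\sqrt{n})$. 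You do not have this. Your Schur-complement reduction would restore operator-norm control in the rotated basis, but---as you yourself note---destroys the i.i.d.\ row structure the LCD argument needs. Once you retreat to the original basis, the Schur complement tells you only that any kernel vector lies in $G_{K'}=\{\snorm{x}_2\le 1,\ \snorm{Bx}_2\le 2K'\sqrt{n}\}$; it does not tell you how to net $S_D\cap G_{K'}$ efficiently in the metric induced by $B+N$. That net construction is exactly the content of the paper's key \cref{lem:metric-entropy-LCD}, which covers $S_D\cap G_{K'}$ by translates of an ellipsoid adapted to the singular-vector geometry of $B$, trading off a lattice-point count against a cover count in a way that gains the crucial factor $\mu^{\eta n/2}$. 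Nothing in your Schur-complement reformulation substitutes for this step.
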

\begin{remark}
~~~~~
\begin{enumerate}
    \item In the setting where  $M$ has i.i.d sub-Gaussian entries with mean $0$ and variance $1$,
    the condition that the $K$-rank of $A$ is at most $(1-\eta)n$ is a relaxation of the condition $\sum_{i=1}^{n} s_i(A)^{2} = O(n^{2})$ appearing in \cite{livshyts2019smallest} (although, note that the noise $M$ in \cite{livshyts2019smallest} may be inhomogeneous, and is allowed to have much heavier tails). Notably, it encompasses and generalizes the class of low-rank matrices, which are especially important in applications (see, e.g., \cite{abbe2020entrywise, o2018random}).  
    \item The dependence of $c$ on $\eta$ is necessary in general. Indeed,  \cref{example:rudelson} provides an example of $A$ and $M$ such that $A$ has $0$-rank $n-k$, and such that on an event of probability at least $2^{-k}$, $s_n(A+M)$ depends on $\snorm{A}$.  
    \item The only place where sub-Gaussianity of $\xi$ is used in our proof is to guarantee that $\snorm{M} = O(\sqrt{n})$ except with exponentially small probability. It is well-known \cite{bai2010spectral} that $\snorm{M} = O(\sqrt{n})$ with high probability, as long as the entries of $M$ have uniformly bounded fourth moment. Consequently, \cref{thm:low-stable-rank} continues to hold under the more general assumption that $\xi$ is a random variable with mean $0$, variance $1$, and finite fourth moment, provided we add the term $(1-\mb{P}[\snorm{M} = O(\sqrt{n})])$ to the right-hand side of \cref{eqn:our-bound}. 
    \item In \cite{tikhomirov2016smallest}, it was shown by Tikhomirov that if $M'$ is an $N \times n$ matrix ($N \ge (1+\delta)n$) with i.i.d uniformly anti-concentrated entries, then for any $N \times n$ matrix $A'$, $\mb{P}[s_n(A'+M') \le u\sqrt{N}] \le 2\exp(-vN)$, where $u,v$ depend only on $\delta > 0$ and the uniform bound on the anti-concentration. Compared to \cite{tikhomirov2016smallest}, the main innovation in our work (\cref{lem:metric-entropy-LCD}) is the consideration of the arithmetric structure of normal vectors to random hyperplanes without any \emph{a priori} control on the operator norm (this is unnecessary in the rectangular case).     
\end{enumerate}
\end{remark}

Next, we study the influence of $A$ on the lower tail of $s_n(A+M)$. As noted above, in the case when $\snorm{A} \le n^{C_1}$ and the target probability is $n^{-C_2}$, the best-known lower bound on the smallest singular value  due to Tao and Vu \cref{eqn:TV-dependence} is of the form $n^{-O(C_1C_2 + C_1 + 1)}$. It is natural to ask whether this dependence may be improved, in particular, if the term $C_1C_2$ may be replaced by $C_1 + C_2$ (this seems to have been speculated by Tao and Vu in \cite{TV10}). Our second main result shows that a dependence of the form $O(C_1 + C_2)$ is not possible, and in fact, that the term $C_1 C_2$ cannot be replaced by anything asymptotically smaller than $C_1 \sqrt{C_2}$. 

\begin{theorem}\label{thm:counterexample}
Let $R_n$ be an $n\times n$ random matrix, each of whose entries is an independent copy of a lazy Rademacher random variable (i.e., $0$ with probability $1/2$, and $\pm 1$ with probability $1/4$ each). There exist positive constants $K, C > 0$ for which the following holds. Let $L\ge 2K\sqrt{n}$, and  let $A = \on{diag}(L,\ldots,L,0)$. Then, for each positive integer $t$ and for all $n$ sufficiently large (depending only on $t$), we have
\[\mb{P}\bigg[s_n(A+R_n)\le C\bigg(\frac{K\sqrt{n}}{L}\bigg)^t\bigg]\ge(4C)^{-t}K^{-\frac{(t-1)(t-2)}{2}}(\log 2t)^{1-t}n^{-\frac{t(t-1)}{4}}.\]
\end{theorem}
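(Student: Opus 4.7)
The plan is to produce an explicit approximate null vector for $A+R_n$ via a Neumann expansion and then to use the randomness of the last row and last column of $R_n$ to force the leading terms of that expansion to vanish. Writing $A+R_n$ in block form with $R'$ denoting the top-left $(n-1)\times(n-1)$ principal block, $r,s\in\mb{R}^{n-1}$ the first $n-1$ entries of the last column and last row respectively, and $w:=R_{n,n}$, I take $v=(-(LI+R')^{-1}r,\,1)^{\top}$. Then the first $n-1$ coordinates of $(A+R_n)v$ automatically vanish, so $\|v\|\ge 1$ gives $s_n(A+R_n)\le |[(A+R_n)v]_n|$. On the event $\|R'\|\le K\sqrt{n}$ (overwhelmingly likely for $K$ a large absolute constant), the Neumann series $(LI+R')^{-1}=L^{-1}\sum_{k\ge 0}(-L)^{-k}(R')^{k}$ converges (using $L\ge 2K\sqrt{n}$), yielding
\[[(A+R_n)v]_n\;=\;Q_0+\sum_{k\ge 1}(-L)^{-k}Q_k,\qquad Q_0:=w,\quad Q_k:=s^{\top}(R')^{k-1}r.\]

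The strategy is to arrange $Q_0=Q_1=\cdots=Q_{t-1}=0$: the leading surviving contribution is then $(-L)^{-t}Q_t$, while the crude operator-norm bound $|Q_k|\le\|s\|\|R'\|^{k-1}\|r\|$ pushes the Neumann tail beyond index $t$ into $O(K^{-1}(K\sqrt n/L)^{t})$, and a routine second-moment computation $\mb{E}[Q_t^{2}]\asymp n^{t}$ ensures $|Q_t|\lesssim K^{t}n^{t/2}$ with constant (conditional) probability. The heart of the argument is then a joint anti-concentration estimate. Conditional on $R'$ and $r$, the variable $Q_0=w$ is independent of the rest with $\mb{P}[Q_0=0]=\tfrac12$, and $(Q_1,\ldots,Q_{t-1})=C^{\top}s$ where $C=[c_1|\cdots|c_{t-1}]$ has columns $c_k:=(R')^{k-1}r\in\mb{Z}^{n-1}$ and $s_1,\ldots,s_{n-1}$ are i.i.d.\ lazy Rademachers. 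Using $(1+\cos\xi)/2\ge\exp(-c_0\xi^{2})$ near zero inside the Fourier representation
\[\mb{P}[(Q_1,\ldots,Q_{t-1})=0\mid R',r]\;=\;\frac{1}{(2\pi)^{t-1}}\int_{[-\pi,\pi]^{t-1}}\prod_i\frac{1+\cos((C\xi)_i)}{2}\,d\xi,\]
a Gaussian-integral estimate gives a lower bound of order $c^{t-1}/\sqrt{\det C^{\top}C}$. Hadamard combined with $\|c_k\|^{2}\le K^{2(k-1)}n^{k}$ gives $\det C^{\top}C\le K^{(t-1)(t-2)}n^{t(t-1)/2}$, recovering the $K^{-(t-1)(t-2)/2}n^{-t(t-1)/4}$ factor in the claimed probability; the $(\log 2t)^{1-t}$ loss I would expect to come from lattice corrections when extending the integral away from the Gaussian regime, while $(4C)^{-t}$ absorbs per-dimension absolute constants from the Gaussian integral and the conditioning on $|Q_t|$.

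The main obstacle is precisely this joint anti-concentration lower bound: the coefficient matrix $C$ is itself random, and to turn the Fourier lower bound into a clean estimate matching the Hadamard upper bound on $\det C^{\top}C$ one must restrict $(R',r)$ to a ``good'' set on which none of the columns $c_k=(R')^{k-1}r$ is anomalously close to a low-order lattice direction (which would inflate the integral in an uncontrolled way). Verifying that this good set carries sufficient probability, while simultaneously retaining $\|R'\|\lesssim\sqrt n$ and $|Q_t|\lesssim K^t n^{t/2}$, is where essentially all the technical effort would go; the remaining steps---Neumann convergence, the geometric tail bound, and the passage from $|[(A+R_n)v]_n|$ to $s_n(A+R_n)$ via $\|v\|\ge 1$---are routine once this estimate is in place.
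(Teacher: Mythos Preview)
Your reduction---block decomposition, the test vector $v = (-(LI+R')^{-1}r,\,1)^\top$, Neumann expansion, and the goal of forcing $Q_0 = \cdots = Q_{t-1} = 0$---is exactly the paper's \cref{lem:reduction}. The divergence is entirely in the joint anti-concentration step, and the gap you flag there is real: making $\int_{[-\pi,\pi]^{t-1}}\prod_i\frac{1+\cos((C\xi)_i)}{2}\,d\xi \gtrsim c^{t-1}/\sqrt{\det C^\top C}$ rigorous requires at minimum that the Gaussian comparison region $\{\|C\xi\|_2 \le c\}$ sit inside $[-\pi,\pi]^{t-1}$, i.e.\ that the Krylov matrix $C = [r\,|\,R'r\,|\cdots|\,(R')^{t-2}r]$ have $\sigma_{\min}(C)$ bounded below. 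Showing this holds with adequate probability over $(R',r)$, uniformly in $t$, is essentially the joint local CLT input that the paper's remark after the theorem identifies as the barrier to sharpening the result; your speculation that the $(\log 2t)^{1-t}$ arises from ``lattice corrections'' here is off---in the paper it comes from somewhere else entirely.

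The paper bypasses all of this via the $2$-divisibility of the lazy Rademacher: write $r = u_1 - u_2$ with $u_1, u_2 \in \{\pm\tfrac12\}^{n-1}$ i.i.d. Then $\{Q_1 = \cdots = Q_{t-1} = 0\}$ is precisely the collision event that the two half-integer vectors $\bigl(s^\top(R')^{k-1}u_j\bigr)_{k=1}^{t-1}$, $j=1,2$, coincide. Conditioning (via the degree-$2$ hypercontractive tail bound applied $t$ times---the source of the $(\log 2t)^{t-1}$) on the event that each vector lies in the box $\prod_k[-CK^{k-1}(\log 2t)n^{k/2},\,CK^{k-1}(\log 2t)n^{k/2}]$, Cauchy--Schwarz immediately gives collision probability at least the reciprocal of the box's half-integer point count. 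No control on the conditioning of $C$ is needed. One smaller slip: your crude operator-norm tail bound $|Q_k|\le\|s\|\,\|R'\|^{k-1}\|r\|\le n(K\sqrt{n})^{k-1}$ actually yields $\sum_{k\ge t}|Q_k|/L^k \lesssim (\sqrt{n}/K)(K\sqrt{n}/L)^t$, a factor $\sqrt{n}$ too large; the paper instead bounds the tail operator in Hilbert--Schmidt norm and applies the hypercontractive inequality $\mb{P}[|s^\top Qr|\ge x\|Q\|_{\mathrm{HS}}]\le c^{-1}e^{-cx}$ to recover the correct scale.
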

\begin{remark}
The dependence of the probability on $n$ should be sharp. However, establishing this would essentially require a joint local central limit theorem involving certain low-degree polynomials. Moreover, such a result would likely allow extension to the case when the entries of $R_{n}$ are Rademacher random variables (as we will see, our proof is considerably simplified by using the $2$-divisibility of lazy Rademacher random variables).
\end{remark}

\subsection{Acknowledgements}
We thank Yang P. Liu for discussions related to the relevant convex geometry and Mark Rudelson for valuable comments on the manuscript. 

\section{Proof of \texorpdfstring{\cref{thm:low-stable-rank}}{Theorem 1.2}}\label{sec:proof}
The proof of \cref{thm:low-stable-rank} follows the (by now) standard geometric framework pioneered by Rudelson and Vershynin \cite{RV08}. 
The key deviation in our argument is in showing that a unit normal vector to the random hyperplane spanned by the (say) first $n-1$ rows of the matrix is arithmetically very unstructured (in the sense of having exponentially large Least Common Denominator (LCD)). The union bound argument in \cite{RV08} or its refinement in \cite{livshyts2019smallest} is inadequate here since the matrix $A$ may be arbitrarily large. Our innovation (\cref{lem:metric-entropy-LCD}) is to execute a union bound argument by covering level sets of the LCD by certain oblique convex sets adapted to the geometry of $A$, which exploits the fact that it suffices to cover only those vectors $v$ which are ``approximately orthogonal'' to the large singular vectors of $A$.

Throughout this section, we will fix $K$ and $\eta$ as in the statement of \cref{thm:low-stable-rank}, and let $B$ and $N$ denote the $(n-1)\times n$ matrices consisting of the first $n-1$ rows of $A$ and $M$ respectively. Note that the $B$ has at least $\eta n/2$ singular values smaller than $K\sqrt{n}$, as is readily seen by applying the Cauchy interlacing theorem to the matrix $AA^{T}$ and its minor $BB^{T}$. 
Throughout, we will let $V \subseteq \mb{R}^{n-1}$ denote the span of the bottom $\eta n/2$ left-singular vectors of $B$ (note that the singular values corresponding to these singular vectors are all at most $K\sqrt{n}$). Moreover, $P_{V}$ will denote the orthogonal projection operator from $\mb{R}^{n-1}$ onto $V$. 

We will use the decomposition of the unit sphere into compressible and incompressible vectors, formalized by Rudelson and Vershynin \cite{RV08}. 
\begin{definition}[Compressible and Incompressible vectors, {\cite[Definition~3.2]{RV08}}]Fix $\delta, \rho \in (0,1)$. A vector $x \in \mb{R}^{N}$ is said to be \emph{sparse} if $|\on{supp}(x)| \le \delta n$. A vector $x \in \mb{S}^{N-1}$ \emph{compressible} if it is within Euclidean distance $\rho$ of a sparse vector, and \emph{incompressible} otherwise. We denote the set of compressible and incompressible vectors by $\on{Comp}_{N}(\delta, \rho)$ and $\on{Incomp}_{N}(\delta, \rho)$ respectively, dropping the subscript $N$ when the underlying dimension is clear from context.
\end{definition}

We will also need the notion of the L\'evy concentration function. 
\begin{definition}[L\'evy concentration function]For a random variable $X$ and a real number $r \ge 0$, we define the L\'evy concentration function of $X$ at radius $r$ by 
\[\mc{L}(X, r) = \sup_{y\in \mb{R}}\mb{P}[|X-y|\le r].\]

\end{definition}

First, we show that any fixed vector $v \in \mb{S}^{n-1}$ has exponentially small probability of being in the kernel of the first $n-1$ rows of $(A + M)$. In fact, we will show something stronger: for every fixed vector $v \in \mb{S}^{n-1}$, its image under $P_V(B+N)$ has norm $\Omega(\sqrt{n})$ except with exponentially small probability; this strengthening will be crucial for the union bound argument in \cref{lem:compressible-vectors}.

The next lemma follows immediately from  \cite[Corollary~6]{tikhomirov2016smallest}, which is a direct consequence of the main result in \cite{RV15}. For sub-Gaussian random variables, it follows easily from the Hanson-Wright inequality (straightforwardly modifying the proofs of \cite[Corollary~2.4]{RV13} and \cite[Corollary~3.1]{RV13}); we include the deduction for  the reader's convenience.


\begin{lemma}[Invertibility on a single vector]\label{lem:fixed-vector}
There exists $c_{\ref{lem:fixed-vector}} > 0$ depending only on $\eta$ and the sub-Gaussian norm of $\xi$ such that for any $v \in \mb{S}^{n-1}$, 
\[\mb{P}[\snorm{P_V(B+N)v}_2\le c_{\ref{lem:fixed-vector}}\sqrt{n}]\le 2e^{-c_{\ref{lem:fixed-vector}}n}.\]
\end{lemma}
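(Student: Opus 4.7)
The plan is to decompose $\snorm{P_V(B+N)v}_2^2$ into a deterministic part, a linear part, and a quadratic part in the Gaussian entries of $N$, and control the latter two via standard sub-Gaussian concentration and the Hanson--Wright inequality respectively.

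First, observe that the vector $x := Nv \in \mb{R}^{n-1}$ has independent sub-Gaussian coordinates with mean $0$ and variance $\snorm{v}_2^2 = 1$, and with sub-Gaussian norm bounded in terms of $\snorm{\xi}_{\psi_2}$. Write $w := P_V B v$, which is deterministic. The key preliminary observation is that by definition $V$ is spanned by left-singular vectors of $B$ whose singular values are at most $K\sqrt{n}$; hence expanding $Bv$ in the singular basis yields the norm bound
\[
\snorm{w}_2^2 \;=\; \sum_{i \in I_V} s_i(B)^2 \langle r_i, v\rangle^2 \;\le\; K^2 n,
\]
where $I_V$ indexes the bottom $\eta n/2$ singular vectors of $B$ and $r_i$ are the corresponding right-singular vectors. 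Then, using that $w \in V$ and $P_V$ is self-adjoint, I would expand
\[
\snorm{P_V(B+N)v}_2^2 \;=\; \snorm{w}_2^2 \,+\, 2\langle w, x\rangle \,+\, x^T P_V x,
\]
whose expectation is $\snorm{w}_2^2 + \on{tr}(P_V) \ge \eta n/2$.

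Next, I would bound the fluctuation of each of the random terms. Applying the Hanson--Wright inequality to the quadratic form $x^T P_V x$ with $\snorm{P_V}_F^2 = \eta n/2$ and $\snorm{P_V} = 1$ gives
\[
\mb{P}\bigl[\,\bigl|x^T P_V x - \on{tr}(P_V)\bigr| \ge \eta n/8\,\bigr] \;\le\; 2\exp(-c_1 n),
\]
with $c_1$ depending only on $\snorm{\xi}_{\psi_2}$ and $\eta$. For the linear term, $\langle w, x\rangle$ is a sum of independent sub-Gaussian variables, so
\[
\mb{P}\bigl[\,|\langle w, x\rangle| \ge \eta n/16\,\bigr] \;\le\; 2\exp\!\bigl(-c_2 n^2/\snorm{w}_2^2\bigr) \;\le\; 2\exp(-c_2' n),
\]
using the crucial norm bound $\snorm{w}_2 \le K\sqrt{n}$ established above; here $c_2'$ depends on $K$, $\eta$, and $\snorm{\xi}_{\psi_2}$.

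Finally, on the intersection of these two good events (which has probability at least $1 - 4e^{-cn}$ for $c := \min(c_1, c_2')/2$, absorbing the constant $4$ into a worse constant), we obtain
\[
\snorm{P_V(B+N)v}_2^2 \;\ge\; \eta n/2 \,-\, \eta n/8 \,-\, \eta n/8 \;=\; \eta n/4,
\]
giving the desired lower bound $\snorm{P_V(B+N)v}_2 \ge \sqrt{\eta/4}\cdot\sqrt{n}$. I do not anticipate serious obstacles: the only subtle point is that the norm bound on the deterministic shift $w$ comes directly from the defining property of $V$ (spanned by small singular vectors of $B$), and it is precisely this bound that prevents the linear cross-term from overwhelming the gain furnished by Hanson--Wright on the quadratic part.
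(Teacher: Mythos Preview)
Your argument is correct, but it takes a somewhat different route from the paper's. The paper's proof eliminates the deterministic shift $P_VBv$ altogether by a symmetrization trick: squaring the probability and introducing an independent copy $N'$ of $N$, one has
\[
\mb{P}[\snorm{P_V(B+N)v}_2\le \sqrt{\eta n}/4]^2 \le \mb{P}[\snorm{P_V(N-N')v}_2\le \sqrt{\eta n}/2],
\]
after which a single application of Hanson--Wright (in the form of norm concentration, \cite[Theorem~2.1]{RV13}) finishes the job. Your approach instead expands the square directly and controls the linear cross-term $2\langle P_VBv, Nv\rangle$ using the bound $\snorm{P_VBv}_2\le K\sqrt{n}$ coming from the definition of $V$.

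Both are valid; the main difference in outcome is that the paper's symmetrization never touches $B$, so the resulting constant $c_{\ref{lem:fixed-vector}}$ depends only on $\eta$ and $\snorm{\xi}_{\psi_2}$, exactly as the lemma asserts. Your constant picks up an additional dependence on $K$ through the bound on the linear term, so strictly speaking you prove a slightly weaker statement than the one written. This is harmless for the downstream application (the constants in \cref{lem:compressible-vectors} depend on $K$ anyway), but it is worth being aware of the discrepancy. If you want to match the stated lemma exactly, the symmetrization step is the cleanest fix.
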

\begin{proof}
Let $N$ and $N'$ be independent copies of $N$. Then
\begin{align*}
\mb{P}[\snorm{P_V(B+N)v}_2\le\sqrt{\eta n}/4]^2 &=   \mb{P}[\snorm{P_V(B+N)v}_2\le\sqrt{\eta n}/4 \cap \snorm{P_V(B+N')v}_2\le \sqrt{\eta n}/4]\\
&\le\mb{P}[\snorm{P_V(N-N')v}_2\le\sqrt{\eta n}/2].
\end{align*}
Note that the coordinates of $(N-N')$ are i.i.d sub-Gaussian random variables with variance $2$, and sub-Gaussian norm depending only on that of $\xi$. 
In particular, 
\[\mb{E}[\snorm{P_V(N-N')v}_2^2] = 2\on{Tr}(P_V^TP_V)\ge\eta n.\]
The result now follows from the Hanson-Wright inequality applied to the matrix $P_V$ and vector $(N-N')v$ (see \cite[Theorem~2.1]{RV13}), which shows that
\[\mb{P}[|\snorm{P_V(N-N')v}_2 - 2\on{Tr}(P_V^TP_V)|>t]\le 2\exp(c_\xi t^2),\]
where $c_\xi$ depends only on the sub-Gaussian norm of $\xi$.
\end{proof}

Given this, we can quickly derive invertibility on compressible vectors. 
\begin{lemma}[Invertibility on compressible vectors]\label{lem:compressible-vectors}
There exist $\delta_{\ref{lem:compressible-vectors}},\rho_{\ref{lem:compressible-vectors}},c_{\ref{lem:compressible-vectors}} > 0$ depending only on $\eta$, $K$, and the sub-Gaussian norm of $\xi$ for which
\[\mb{P}[\exists v\in\on{Comp}_{\delta_{\ref{lem:compressible-vectors}},\rho_{\ref{lem:compressible-vectors}}}: \snorm{(B+N)v}_2\le c_{\ref{lem:compressible-vectors}}\sqrt{n}]\le 2e^{-c_{\ref{lem:compressible-vectors}}n}.\]
\end{lemma}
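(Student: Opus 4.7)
The plan is to reduce the claim for $(B+N)$ to the projected operator $P_V(B+N)$, for which \cref{lem:fixed-vector} provides pointwise anticoncentration, and then execute a standard net-plus-union-bound argument on sparse vectors. Since $P_V$ is an orthogonal projection, $\snorm{(B+N)v}_2\ge\snorm{P_V(B+N)v}_2$, so it suffices to lower bound the latter uniformly on $\on{Comp}(\delta,\rho)$.

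The crucial observation, and the one that lets the argument succeed without any a priori control on $\snorm{B}$, is that $V$ is chosen so that $P_V B$ has operator norm at most $K\sqrt{n}$. Indeed, if $B=\sum_i\sigma_i u_i v_i^T$ is the SVD of $B$, then $P_V B=\sum_{i\in S}\sigma_i u_i v_i^T$ where $S$ indexes the bottom $\eta n/2$ singular values, all of size at most $K\sqrt{n}$. Since the entries of $\xi$ are sub-Gaussian, $\snorm{N}\le\snorm{M}\le K'\sqrt{n}$ except on an event of exponentially small probability (cf. the third item of the remark after \cref{thm:low-stable-rank}); call this event $\mc{E}$. On $\mc{E}^c$ we have $\snorm{P_V(B+N)}\le (K+K')\sqrt{n}$.

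Now, fix a parameter $\delta \in (0,1)$ (to be chosen) and let $\mc{N}$ be a minimal $\rho$-net in $\ell_2$ of the set of unit sparse vectors with support of size at most $\delta n$, so that $|\mc{N}|\le\binom{n}{\delta n}(3/\rho)^{\delta n}\le (Ce/\delta)^{\delta n}(3/\rho)^{\delta n}$. By \cref{lem:fixed-vector} and a union bound,
\[\mb{P}\bigl[\exists v\in\mc{N}:\snorm{P_V(B+N)v}_2\le c_{\ref{lem:fixed-vector}}\sqrt{n}\bigr]\le 2|\mc{N}|\exp(-c_{\ref{lem:fixed-vector}}n).\]
Choose $\delta=\delta(\eta,K,\snorm{\xi}_{\psi_2})$ small enough that $|\mc{N}|\le\exp(c_{\ref{lem:fixed-vector}}n/2)$, so this probability is at most $2\exp(-c_{\ref{lem:fixed-vector}}n/2)$.

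Finally, approximation closes the argument. Any $v\in\on{Comp}(\delta,\rho)$ is within $\ell_2$-distance $\rho$ of a sparse vector $v'$, which in turn is within distance $\rho$ of some $v''\in\mc{N}$. On the intersection of $\mc{E}^c$ with the good event above, for all $v\in\on{Comp}(\delta,\rho)$,
\[\snorm{P_V(B+N)v}_2\ge\snorm{P_V(B+N)v''}_2-\snorm{P_V(B+N)}\cdot 2\rho\ge c_{\ref{lem:fixed-vector}}\sqrt{n}-2(K+K')\rho\sqrt{n},\]
which is at least $c_{\ref{lem:fixed-vector}}\sqrt{n}/2$ once $\rho$ is chosen small enough depending on $K,K',c_{\ref{lem:fixed-vector}}$. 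The main obstacle one would expect in a naive approach, namely controlling the Lipschitz error $\snorm{(B+N)(v-v'')}_2$ without knowing $\snorm{B}$, is bypassed precisely by passing to $P_V(B+N)$: the unbounded directions of $B$ are discarded by the projection, while \cref{lem:fixed-vector} still delivers the $\sqrt{n}$ lower bound needed to feed the net.
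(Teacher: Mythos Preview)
Your argument is correct and follows essentially the same route as the paper: pass to $P_V(B+N)$ so that the operator norm is $O_{\!K}(\sqrt{n})$, apply \cref{lem:fixed-vector} on a net of sparse vectors, and close by approximation. One cosmetic point: as written you choose $\delta$ before $\rho$, but the net size $|\mc{N}|\le (Ce/\delta)^{\delta n}(3/\rho)^{\delta n}$ depends on $\rho$; since the constraint on $\rho$ involves only $K,K',c_{\ref{lem:fixed-vector}}$ and not $\delta$, simply fix $\rho$ first and then $\delta$, and also note that the sparse approximant $v'$ need not be unit, so normalize it before applying the net (this costs an extra $\rho$ in the approximation but is harmless).
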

\begin{proof}
The key point is that, by definition of $V$,  $\snorm{P_VB}\le K\sqrt{n}$. 
Since $\xi$ has bounded sub-Gaussian norm, it follows (cf. \cite[Lemma~2.4]{RV08}) that there exists $K'$, depending only on $K$ and the sub-Gaussian norm of $\xi$, for which the event
$\mc{E}_{K'} = \{\snorm{P_V(B+N)}\le K'\sqrt{n}\}$ occurs with probability at most $2e^{-cn}$,
for some $c > 0$ depending only on the sub-Gaussian norm of $\xi$. 

Let $\delta, \rho, c' > 0$, and consider an $\epsilon$-net $\mc{N}$ of $\on{Comp}_{\delta,\rho}$. Then, we have
\begin{align*}
\mb{P}[\exists v\in\on{Comp}_{\delta,\rho}&: \snorm{(B+N)v}_2\le c'\sqrt{n}]\\
&\le\mb{P}[\mc{E}_{K'}]+\mb{P}[\exists v\in\on{Comp}_{\delta,\rho}: \snorm{P_V(B+N)v}_2\le c'\sqrt{n}\wedge\mc{E}_{K'}^c]\\
&\le 2e^{-cn}+\mb{P}[\exists v\in\mc{N}: \snorm{P_V(B+N)v}_2\le (c'+K'\epsilon)\sqrt{n}].
\end{align*}
Therefore, by using \cref{lem:fixed-vector}, we can conclude, provided we first choose $c', \epsilon$ small enough so that
$c'+K'\epsilon < c_{\ref{lem:fixed-vector}
}$ (this is clearly possible), and then choose $\delta,\rho$ small enough so that there is an $\epsilon$-net $\mc{N}$ of $\on{Comp}_{\delta,\rho}$ of size at most $\exp(c_{\ref{lem:fixed-vector}}n/2)$ (this is possible by a standard volumetric bound on the size of $\epsilon$-nets of $\on{Comp}_{\delta, \rho}$, cf. \cite[Lemma~4.3]{Rud14}). We omit the  standard details.
\end{proof}

For invertibility on incompressible vectors, we will use the following reduction due to Rudelson and Vershynin.
\begin{lemma}[Invertibility via distance, {\cite[Lemma~3.5]{RV08}}]
\label{lem:invertibility-distance}
Let $A$ be a random $n\times n$ matrix. Let $X_i$ denote the column vectors of $A$, and let $H_k = \on{span}(X_{-k})$ denote the span of all columns except for the $k^{th}$ column. Then for every $\delta,\rho\in(0,1)$ and $\epsilon\ge 0$ we have that 
\[\mb{P}\left[\inf_{x\in\on{Incomp}(\delta,\rho)}\snorm{Ax}_2\le \epsilon\rho n^{-1/2}\right]\le \frac{1}{\delta n}\sum_{k=1}^{n}\mb{P}[\on{dist}(X_k,H_k)\le \epsilon].
\]
\end{lemma}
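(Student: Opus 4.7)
The plan is to combine a deterministic \emph{spread property} of incompressible vectors with a coordinate-wise lower bound on $\snorm{Ax}_2$ in terms of the column-to-hyperplane distances, and then average over $k \in [n]$ using linearity of expectation.

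First, I would establish the spread property: for every $x \in \on{Incomp}(\delta,\rho)$, the set $S(x) := \{k \in [n] : |x_k| \ge \rho/\sqrt{n}\}$ has cardinality at least $\delta n$. Indeed, if $|S(x)| < \delta n$, the restriction $y$ of $x$ to the coordinates in $S(x)$ is $(\delta n)$-sparse, while
\[\snorm{x - y}_2^2 = \sum_{k \notin S(x)} |x_k|^2 \le n\cdot(\rho/\sqrt{n})^2 = \rho^2,\]
so $x$ would be compressible, contradicting the hypothesis.

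Next, I would record the elementary identity
\[\snorm{Ax}_2 \ge |x_k|\cdot\on{dist}(X_k, H_k) \qquad \text{for every } x \in \mb{R}^n \text{ and every } k \in [n],\]
which follows by decomposing $Ax = x_k X_k + \sum_{j\ne k} x_j X_j$ and projecting onto the unit vector orthogonal to $H_k$: only the first summand contributes, with magnitude exactly $|x_k|\cdot\on{dist}(X_k, H_k)$.

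Combining these two facts, whenever some $x \in \on{Incomp}(\delta,\rho)$ witnesses $\snorm{Ax}_2 \le \epsilon\rho n^{-1/2}$, each $k \in S(x)$ satisfies $\on{dist}(X_k, H_k) \le \epsilon$, so at least $\delta n$ indices do so. (A minor technicality is that the infimum in the event need not be attained; one handles this by replacing $\epsilon\rho n^{-1/2}$ with $\epsilon\rho n^{-1/2} + \eta$, drawing a witness $x$, and then letting $\eta \to 0$, which inflates the distance bound only by $\eta\sqrt{n}/\rho$.) Writing $\mc{E}$ for the event in the lemma statement, this yields the pointwise indicator inequality $\delta n\cdot\one_{\mc{E}} \le \sum_{k=1}^{n}\one[\on{dist}(X_k, H_k) \le \epsilon]$; taking expectations and dividing by $\delta n$ gives the claim. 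The argument is essentially careful bookkeeping, and the only point requiring care is the spread property, which is a simple volume/sparsity observation; there is no serious obstacle.
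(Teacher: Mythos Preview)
The paper does not supply its own proof of this lemma; it is quoted verbatim from \cite[Lemma~3.5]{RV08} and used as a black box. Your argument is correct and is precisely the standard Rudelson--Vershynin proof: the spread property of incompressible vectors combined with the coordinate-wise bound $\snorm{Ax}_2 \ge |x_k|\cdot\on{dist}(X_k,H_k)$, followed by the counting/averaging step. There is nothing to compare.
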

Given this, our goal is to find a uniform (in $k \in [n]$) upper bound on $\mb{P}[\on{dist}(X_k, H_k) \le \epsilon]$.

We will need the crucial notion of the Least Common Denominator due to Rudelson and Vershynin.



\begin{definition}[Least common denominator, cf. \cite{Rud14}]\label{def:LCD}
For a vector $v\in\mb{R}^N$, $\gamma \in (0,1)$, and $\alpha > 0$, we define
\[\on{LCD}_{\alpha,\gamma}(v) = \inf\{\theta > 0: \on{dist}(\theta v,\mb{Z}^{N}) < \min(\gamma\snorm{\theta v},\alpha)\}.\]
\end{definition}

We collect some useful properties of the LCD. 

\begin{lemma}[Lower bound on LCD, cf. {\cite[Lemma~6.1]{Rud14}}]
\label{lem:lower-bound-LCD}
Fix $\delta,\rho>0$. There exist $\gamma,\lambda>0$ (depending only on $\delta,\rho$) such that for any $v\in\on{Incomp}_N(\delta,\rho)$ and for all $\alpha > 0$, we have $\on{LCD}_{\alpha, \gamma}(v)\ge \lambda \sqrt{n}$.
\end{lemma}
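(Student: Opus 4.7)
The plan is to extract from incompressibility a linear-sized set of coordinates of $v$ of regular size $\Theta(1/\sqrt{N})$, and then to observe that for small dilations $\theta$ these coordinates prevent $\theta v$ from being close to $\mb{Z}^N$, uniformly in $\alpha$.

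First, I would prove the following spread-coordinate lemma: for any $v \in \on{Incomp}_N(\delta,\rho)$ there exist positive constants $c_1, c_2, c_3$ depending only on $\delta,\rho$ and a subset $I \subseteq [N]$ with $|I|\ge c_1 N$ such that $c_2/\sqrt{N}\le |v_i|\le c_3/\sqrt{N}$ for every $i\in I$. The lower threshold comes by contradiction against the definition of incompressibility: set $S = \{i : |v_i|\le c_2/\sqrt{N}\}$ with $c_2<\rho$, so that $\snorm{v_S}_2 \le c_2<\rho$; then $v_{S^c}$ is within distance $<\rho$ of $v$, and incompressibility forces $|S^c|>\delta N$. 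The upper threshold is a Chebyshev-type bound: since $\snorm{v}_2^2 = 1$, at most $N/c_3^2$ indices satisfy $|v_i|>c_3/\sqrt{N}$, and this is at most $\delta N/2$ provided $c_3$ is chosen large enough in terms of $\delta$. Intersecting the two sets yields $|I|\ge \delta N/2$, with the claimed window.

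Second, I would set $\lambda := 1/(2c_3)$ and bound the distance $\on{dist}(\theta v,\mb{Z}^N)$ for $0<\theta<\lambda\sqrt{N}$. For every $i\in I$ we have $|\theta v_i|<\lambda c_3 = 1/2$, so the nearest integer to $\theta v_i$ is $0$ and the one-dimensional distance equals $|\theta v_i|$. Restricting the sum in $\on{dist}(\theta v,\mb{Z}^N)^2$ to $I$ then gives
\[
\on{dist}(\theta v,\mb{Z}^N)^2 \;\ge\; \sum_{i\in I}\theta^2 v_i^2 \;\ge\; |I|\cdot\frac{c_2^2\theta^2}{N} \;\ge\; \frac{c_2^2\delta}{2}\,\theta^2 \;=\; \frac{c_2^2\delta}{2}\snorm{\theta v}_2^2.
\]
Taking $\gamma := c_2\sqrt{\delta/2}$, this reads $\on{dist}(\theta v,\mb{Z}^N)\ge \gamma\snorm{\theta v}_2\ge\min(\gamma\snorm{\theta v}_2,\alpha)$ for every $\theta\in(0,\lambda\sqrt{N})$, uniformly in $\alpha>0$. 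By \cref{def:LCD}, no such $\theta$ can lie in the infimum-defining set, yielding $\on{LCD}_{\alpha,\gamma}(v)\ge\lambda\sqrt{N}$ as required.

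The entire argument is essentially bookkeeping, and I do not anticipate any real obstacle: the only nontrivial ingredient is the spread-coordinates lemma, which is a direct repackaging of the definition of incompressibility together with Markov's inequality, and the LCD estimate is then a one-line consequence of the fact that $|\theta v_i|<1/2$ forces the nearest integer to be zero. This is the standard strategy in \cite{Rud14}.
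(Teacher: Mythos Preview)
The paper does not supply its own proof of this lemma; it is stated with a citation to \cite[Lemma~6.1]{Rud14} and used as a black box. Your argument is correct and is exactly the standard proof found in the cited reference: the spread-coordinates fact for incompressible vectors (sometimes called the ``regularization lemma'' or attributed to \cite[Lemma~3.4]{RV08}) combined with the observation that small dilations keep the spread coordinates in $(-1/2,1/2)$. One cosmetic point: the lemma as printed writes $\lambda\sqrt{n}$ in the conclusion while the hypothesis places $v$ in $\on{Incomp}_N$; in the paper's application $N=n$, and your proof correctly works with $\sqrt{N}$ throughout.
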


\begin{lemma}[Anti-concentration via LCD]\label{lem:levy-concentration-LCD}
For any $\gamma\in(0,1)$, there exist $c_{\ref{lem:levy-concentration-LCD}}(\gamma), C_{\ref{lem:levy-concentration-LCD}}(\gamma) > 0$ depending only on $\gamma$ and the sub-Gaussian norm of $\xi$ for which the following holds. Let $v\in\mb{S}^{N-1}$. Then, for every $\alpha > 0$ and $\epsilon\ge 0$,
\[\mc{L}\left(\sum_{i=1}^{n}v_i\xi_i,\epsilon\right)\le C_{\ref{lem:levy-concentration-LCD}}(\gamma)\epsilon + \frac{C_{\ref{lem:levy-concentration-LCD}}(\gamma)}{\on{LCD}_{\alpha,\gamma}(v)} + C_{\ref{lem:levy-concentration-LCD}}(\gamma)e^{-c_{\ref{lem:levy-concentration-LCD}}(\gamma)\alpha^2},\]
where $\xi_1,\dots,\xi_{n}$ denote i.i.d copies of $\xi$.
\end{lemma}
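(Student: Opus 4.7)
The plan is to follow the standard Rudelson--Vershynin template: apply Esseen's concentration inequality, bound the characteristic function of $S := \sum_{i=1}^n v_i \xi_i$ via symmetrization, and then use \cref{def:LCD} to control $|\phi_S|$ on a suitable interval around zero. Concretely, Esseen's inequality gives, for every $r > 0$,
\[\mc{L}(S, r) \le C_0\, r \int_{-1/r}^{1/r} |\phi_S(t)|\, dt,\]
and factoring $\phi_S(t) = \prod_{i=1}^n \phi_\xi(v_i t)$ reduces the problem to controlling the single-variable characteristic function $|\phi_\xi(\cdot)|$.

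For the latter, I would use the symmetrization $\bar\xi := \xi - \xi'$ (with $\xi'$ an independent copy of $\xi$). The elementary bound $|\phi_\xi(s)|^2 = \mb{E}[\cos(s\bar\xi)] \le \exp(-\mb{E}[1 - \cos(s\bar\xi)])$, combined with $1 - \cos(2\pi x) \ge c_0\, \on{dist}(x, \mb{Z})^2$, yields
\[|\phi_S(t)|^2 \le \exp\!\left(-c_0\, \mb{E}\!\left[\on{dist}\!\left(t\bar\xi v/(2\pi),\, \mb{Z}^n\right)^2\right]\right).\]
Because $\bar\xi$ has variance $2$ and bounded sub-Gaussian norm, there exist constants $c_1, C_1 > 0$ depending only on $\snorm{\xi}_{\psi_2}$ with $\mb{P}[|\bar\xi| \in [c_1, C_1]] \ge 1/2$. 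I would then restrict the expectation above to this good event and invoke \cref{def:LCD}: for $|t|$ at most a suitable constant multiple $T$ of $\on{LCD}_{\alpha,\gamma}(v)$, the quantity $|t\bar\xi|/(2\pi)$ stays below $\on{LCD}_{\alpha,\gamma}(v)$ on the good event, so $\on{dist}(t\bar\xi v/(2\pi), \mb{Z}^n) \ge \min(\gamma|t\bar\xi|/(2\pi), \alpha)$, which after conditioning yields the pointwise bound $|\phi_S(t)| \le \exp(-c'\min(t^2, \alpha^2))$ for all $|t| \le T$ and some $c' = c'(\gamma, \snorm{\xi}_{\psi_2}) > 0$.

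Finally, I would apply Esseen with $r := \max(\epsilon,\, 1/T)$, so that $1/r \le T$ (keeping the integration interval inside the LCD-controlled range) and $\mc{L}(S, \epsilon) \le \mc{L}(S, r)$ by monotonicity. Splitting the Esseen integral at $|t| = \alpha$: the inner piece contributes at most $\int_{|t|\le \alpha} \exp(-c' t^2)\, dt = O(1)$, while the outer piece contributes at most $(2/r)\, e^{-c'\alpha^2}$. Multiplying through by $r$ and using $r \lesssim \epsilon + 1/\on{LCD}_{\alpha,\gamma}(v)$ yields exactly the claimed three-term bound. The main technical point to get right is the coupling of the Esseen radius $r$ to the LCD scale $T$: taking $r$ too small leaves the integration interval outside the LCD-controlled range, while taking $r$ too large spoils the linear-in-$\epsilon$ term. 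Everything else is routine bookkeeping of the standard symmetrization and LCD manipulations.
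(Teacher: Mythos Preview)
The paper does not actually prove \cref{lem:levy-concentration-LCD}; it merely states it as a known result from the Rudelson--Vershynin theory (the LCD is introduced with a citation to \cite{Rud14}, and this small-ball inequality is the standard companion statement). Your proposal reproduces precisely the proof one finds in those references: Esseen's inequality, symmetrization $\bar\xi=\xi-\xi'$, the elementary bound $1-\cos(2\pi x)\gtrsim\on{dist}(x,\mb{Z})^2$, conditioning on $|\bar\xi|$ lying in a bounded interval of positive probability to invoke the LCD definition, and finally choosing the Esseen radius $r=\max(\epsilon,1/T)$ with $T$ a constant multiple of $\on{LCD}_{\alpha,\gamma}(v)$. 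The argument is correct, and there is nothing to compare against in the paper itself.
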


Given \cref{lem:levy-concentration-LCD}, a good upper bound on $\mb{P}[\on{dist}(X_k, H_k) \le \epsilon]$ may be obtained by showing that it is exponentially unlikely that any unit normal to $H_k$ has subexponential LCD.  The proof of Rudelson and Vershynin in the centered sub-Gaussian case accomplishes this by decomposing the set of incompressible vectors into level sets of the LCD (\cref{def:level-set-LCD}), and then finding, for each level set, a net (at appropriate scale) of small enough size to survive a union bound argument. We note that this step requires considerable care, and there is not much room available in the entropy-energy trade-off.  
\begin{definition}[Level sets of LCD]\label{def:level-set-LCD}
Let $D, \gamma, \mu > 0$. We define
\[S_D = \{x\in\mb{S}^{n-1}: D\le\on{LCD}_{\mu\sqrt{n},\gamma}(x)\le 2D\}.\]
\end{definition}

In our setting, such a net argument for $S_D$ has no hope of working, since the operator norm of $A+M$ can be arbitrarily large. The key idea to overcome this is to exploit the fact that for a sufficiently large constant $K'$, the following holds except with exponentially small probability:  $\snorm{Bv}_{2} > 2K'\sqrt{n} $ implies that $\snorm{(B + N)v}_2 > K'\sqrt{n}$. This motivates the following definition. 
\begin{definition}[Restricted level sets of the LCD]
For $K' > 0$, let
\[G_{K'} = \{\snorm{x}_2\le 1\cap\snorm{Bx}_2\le 2K'\sqrt{n}\},\]
and let 
$$S_D'(K') = S_D\cap G_{K'}.$$
\end{definition}
In order to find an efficient net for $S_D'(K')$, we will use the following two easy geometric observations repeatedly. Note that the remainder of the proofs, parallelepipeds will always be ``right-parallelepipeds'' or more colloquially ``boxes''.
\begin{lemma}\label{lem:cube-covering}
There exists an absolute constant $C_{\ref{lem:cube-covering}} \ge 1$ such that any ellipsoid with semiaxes $\ell_i$ can be covered by at most $C_{\ref{lem:cube-covering}}^{n}$  parallelepipeds with widths $\ell_i/\sqrt{n}$ in the direction of the semiaxes.
\end{lemma}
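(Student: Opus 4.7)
The plan is to reduce the claim to the standard problem of covering the Euclidean unit ball $B_2^n$ by axis-aligned cubes of side $1/\sqrt{n}$ via an affine change of coordinates, and then apply the classical volumetric bound for this covering problem.

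First, after a rotation we may assume that $E = \set{x \in \mb{R}^n : \sum_i x_i^2/\ell_i^2 \le 1}$ is axis-aligned. The diagonal map $D = \on{diag}(\ell_1^{-1},\dots,\ell_n^{-1})$ then sends $E$ to $B_2^n$, and sends an axis-aligned cube of side $\epsilon := 1/\sqrt{n}$ to a right-parallelepiped with full widths $\ell_i/\sqrt{n}$ in the direction of the $i$-th semiaxis of $E$. Hence it suffices to show that $B_2^n$ can be covered by $C^n$ axis-aligned cubes of side $\epsilon$.

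For the latter, use the lattice of cubes $\epsilon k + [-\epsilon/2, \epsilon/2]^n$ for $k \in \mb{Z}^n$. Each such cube has diameter $\epsilon\sqrt{n} = 1$, so if it intersects $B_2^n$ then $\snorm{\epsilon k}_2 \le 3/2$, i.e., $\snorm{k}_2 \le 3\sqrt{n}/2$. The number of integer points with $\snorm{k}_2 \le R$ is in turn bounded by the volume of $(R+\sqrt{n}/2)B_2^n$: each such $k$ lies in the unit cube $k+[-1/2,1/2]^n$ of diameter $\sqrt{n}$, these cubes are pairwise disjoint, and they are all contained in the Minkowski thickening $(R+\sqrt{n}/2)B_2^n$. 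Taking $R = 3\sqrt{n}/2$ gives a bound of $\on{Vol}(2\sqrt{n}\cdot B_2^n) = \on{Vol}(B_2^n)\cdot (2\sqrt{n})^n \le (8\pi e)^{n/2}$, where the last inequality uses the standard Stirling estimate $\on{Vol}(B_2^n) \le (2\pi e/n)^{n/2}$. So one can take $C_{\ref{lem:cube-covering}} = \sqrt{8\pi e}$, or any larger absolute constant.

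The argument is entirely classical, and I do not anticipate any substantive obstacle: the only point requiring attention is cosmetic bookkeeping, namely that ``widths $\ell_i/\sqrt{n}$'' refers to the full side length of each right-parallelepiped, matching the full side $\epsilon$ of the cubes used to cover $B_2^n$.
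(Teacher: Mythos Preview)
Your argument is correct and is precisely the ``standard volumetric argument'' the paper invokes without further detail: reduce to covering $B_2^n$ by cubes of side $1/\sqrt{n}$ via the affine map, then count lattice cubes volumetrically. The only slip is notational---it is $D^{-1}=\on{diag}(\ell_1,\dots,\ell_n)$ (not $D$) that sends a cube of side $1/\sqrt{n}$ to a parallelepiped with widths $\ell_i/\sqrt{n}$---but the logic and conclusion are unaffected.
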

\begin{proof}
This follows from a standard volumetric argument. 
\end{proof}
\begin{lemma}\label{lem:set-reduction}
The sets $S_1 = \{\snorm{x}_2\le 1\cap\snorm{Jx}_2\le 1\}$ and $S_2 = \{\snorm{x}_2^2+\snorm{Jx}_2^2\le 1\}$ satisfy $S_2\subseteq S_1\subseteq\sqrt{2}S_2$.
\end{lemma}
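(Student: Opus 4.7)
The plan is essentially to unwind the definitions on each side. For the inclusion $S_2 \subseteq S_1$, I would start from any $x$ with $\snorm{x}_2^2 + \snorm{Jx}_2^2 \le 1$. Since both summands are nonnegative, each individually is bounded by $1$, so $\snorm{x}_2 \le 1$ and $\snorm{Jx}_2 \le 1$, which places $x$ in $S_1$.

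For the inclusion $S_1 \subseteq \sqrt{2} S_2$, I would take $x \in S_1$, i.e., $\snorm{x}_2 \le 1$ and $\snorm{Jx}_2 \le 1$. Adding squares gives $\snorm{x}_2^2 + \snorm{Jx}_2^2 \le 2$. Scaling by $1/\sqrt{2}$ and using linearity of $J$, I obtain $\snorm{x/\sqrt{2}}_2^2 + \snorm{J(x/\sqrt{2})}_2^2 \le 1$, so $x/\sqrt{2} \in S_2$, which is exactly the statement $x \in \sqrt{2}\,S_2$.

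There is no real obstacle: the lemma is a trivial comparison between the $\ell^\infty$ and $\ell^2$ ways of combining the two constraints $\snorm{x}_2 \le 1$ and $\snorm{Jx}_2 \le 1$, with the standard loss factor of $\sqrt{2}$ in dimension two. The lemma's usefulness in the sequel is presumably that $S_2$, being the unit ball of the norm $\sqrt{\snorm{x}_2^2 + \snorm{Jx}_2^2}$, is an ellipsoid to which \cref{lem:cube-covering} can be applied directly, while $S_1$ (a body defined as the intersection of two ellipsoids) is not; the two-sided inclusion allows one to transfer covering estimates from $S_2$ to $S_1$ at the cost of only a constant factor.
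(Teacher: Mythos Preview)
Your proof is correct and is exactly the elementary argument the paper has in mind; the paper's own proof consists solely of the phrase ``This is immediate.'' Your additional remark about why the lemma is useful (replacing the intersection $S_1$ by the ellipsoid $S_2$ so that \cref{lem:cube-covering} applies) is also accurate.
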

\begin{proof}
This is immediate. 
\end{proof}
We will also need the following elementary lattice counting fact.
\begin{lemma}\label{lem:counting-points}
The number of integer lattice points in a right-parallelepiped with dimensions $\ell_i\ge 1$ is at most $C_{\ref{lem:counting-points}}^n\prod_{i=1}^n\ell_i$, where $C_{\ref{lem:counting-points}} \ge 1$ is an absolute constant. 
\end{lemma}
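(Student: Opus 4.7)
The plan is to bound the number of lattice points by the volume of a Euclidean tube around $P$ and then invoke the Steiner tube formula. For every lattice point $z\in P\cap\mb{Z}^n$, the unit cube $z+[-1/2,1/2]^n$ has volume $1$, is contained in $P+[-1/2,1/2]^n$, and has interior disjoint from every other such cube, so
\[
|P\cap\mb{Z}^n|\le \on{vol}\bigl(P+[-1/2,1/2]^n\bigr)\le \on{vol}\bigl(P+(\sqrt{n}/2)B\bigr),
\]
where $B$ denotes the closed unit Euclidean ball in $\mb{R}^n$ and the last inequality uses $[-1/2,1/2]^n\subseteq(\sqrt{n}/2)B$. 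Everything now reduces to controlling the tube volume on the right.

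The Steiner tube formula applied to the convex body $P$ states that
\[
\on{vol}\bigl(P+\epsilon B\bigr)=\sum_{k=0}^{n}V_k(P)\,\kappa_{n-k}\,\epsilon^{n-k},
\]
where $V_k(P)$ are the intrinsic volumes of $P$ and $\kappa_j$ is the volume of the unit ball in $\mb{R}^j$. The intrinsic volumes are rigid-motion invariants, and for a rectangular parallelepiped with perpendicular edge lengths $\ell_1,\dots,\ell_n$ they are the elementary symmetric polynomials $e_k(\ell_1,\dots,\ell_n)$. The crucial point is that the hypothesis $\ell_i\ge 1$ forces every partial product $\prod_{i\in S}\ell_i$ to be bounded by the full product $\prod_{i=1}^n\ell_i$, yielding
\[
e_k(\ell_1,\dots,\ell_n)\le\binom{n}{k}\prod_{i=1}^{n}\ell_i.
\]

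Substituting this estimate into the Steiner formula and then running the formula in reverse on $[0,1]^n$ (whose elementary symmetric polynomials are precisely $\binom{n}{k}$) gives
\[
\on{vol}\bigl(P+(\sqrt n/2)B\bigr)\le\prod_{i=1}^{n}\ell_i\cdot\on{vol}\bigl([0,1]^n+(\sqrt n/2)B\bigr).
\]
Since $[0,1]^n$ is itself contained in a ball of radius $\sqrt n/2$, the Minkowski sum on the right sits inside a ball of radius $\sqrt n$, whose volume is $\kappa_n n^{n/2}$; Stirling's estimate bounds this by $C^n$ for an absolute constant $C$. Chaining the inequalities delivers the claim.

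The only conceptual obstacle is to recognize why the naive approach of enclosing $P$ in a coordinate-aligned bounding box is insufficient: it produces a tube volume of $\prod_i(\ell_i+\sqrt n)$, which overshoots $\prod_i\ell_i$ by a factor of order $n^{n/2}$. The intrinsic-volume expansion, coupled with the comparison $e_k(\ell)\le\binom{n}{k}\prod\ell_i$ available precisely when $\ell_i\ge 1$, is exactly what saves this spurious $n^{n/2}$ factor and enforces the clean geometric-in-$n$ bound.
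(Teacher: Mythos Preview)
Your proof is correct, but it takes a genuinely different route from the paper's.

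The paper argues combinatorially: round each $\ell_i$ up to $\lceil\ell_i\rceil\le 2\ell_i$, partition the enlarged box into $\prod_i\lceil\ell_i\rceil$ unit sub-cubes aligned with the box's own axes, and then observe that any rotated unit cube sits inside a Euclidean ball of radius $\sqrt{n}$, which by the standard disjoint-cube volume argument contains at most $C^n$ integer points. Multiplying out gives the claim.

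Your argument is analytic: you bound the lattice-point count by the tube volume $\on{vol}(P+(\sqrt{n}/2)B)$, expand via the Steiner formula, exploit $\ell_i\ge 1$ to dominate each intrinsic volume $e_k(\ell)$ by $\binom{n}{k}\prod_i\ell_i$, and then resum to recognise $\prod_i\ell_i\cdot\on{vol}([0,1]^n+(\sqrt{n}/2)B)$, which is at most $C^n\prod_i\ell_i$ by the same ball-volume estimate. Both proofs ultimately rest on the bound $\kappa_n n^{n/2}\le C^n$; the paper's decomposition into unit cubes is more elementary and self-contained, whereas your Steiner-formula approach is slicker and makes transparent exactly where the hypothesis $\ell_i\ge 1$ enters (through $e_k(\ell)\le\binom{n}{k}\prod_i\ell_i$). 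Your closing remark about why the naive bound $\prod_i(\ell_i+\sqrt{n})$ fails is a nice diagnostic of the issue.
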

\begin{remark}
Note the parallelepiped does not need to be axis-aligned or centered.
\end{remark}
\begin{proof}
By expanding the dimensions to $\lceil\ell_i\rceil\le 2\ell_i$, and by decomposing the region into unit cubes whose axes are aligned with those of the parallelepiped, it suffices to show that there is an absolute constant $C \ge 1$ such that any rotated unit cube has at most $C^{n}$ integer lattice points in it. Since any rotated unit cube is contained in a ball of radius $\sqrt{n}$, it suffices to show that any ball (not necessarily centered) of radius $\sqrt{n}$ contains at most $C^{n}$ integer lattice points, for an absolute constant $C \ge 1$, which follows from a standard well-known volumetric argument.  
\end{proof}

We are now ready to state and prove our key new ingredient. 

\begin{lemma}[Nets of level sets of LCD]\label{lem:metric-entropy-LCD}
Fix $\lambda, K' > 0$. There is $C_{\ref{lem:metric-entropy-LCD}} = C_{\ref{lem:metric-entropy-LCD}}(\lambda, K,K')$ for which the following holds. For all sufficiently small $\mu > 0$ (depending on $\lambda$) and for all $D \ge \lambda \sqrt{n}$,  
there exists a net $\mc{N}$ of cardinality at most $\mu^{-(1-\eta/2)n}D^2(C_{\ref{lem:metric-entropy-LCD}}D/\sqrt{n})^n$ with the following property: on the event $\snorm{N}\le K'\sqrt{n}$, for any $v\in S_D'(K')$, there is $w\in\mc{N}$ satisfying
\[\snorm{(B+N)(v-w)}_2\le\frac{\mu n}{D}.\]
\end{lemma}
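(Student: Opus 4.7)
The plan is to build $\mc{N}$ in two stages: a coarse lattice approximation using the LCD structure of vectors in $S_D$, followed by a refinement adapted to the singular value decomposition of $B$. Each net element will have the form $w = p/\theta' + q$, where $p \in \mathbb{Z}^n$, $\theta' \in [D,2D]$ is drawn from a discrete subset, and $q$ lies in the right singular subspace $V^{*} \subseteq \mathbb{R}^n$ of $B$ associated to singular values greater than $K\sqrt{n}$ (so $\dim V^{*} \le (1-\eta/2)n$).

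First I would choose $\mu$ small enough (say $\mu \le \gamma\lambda$) that $\mu\sqrt{n} \le \gamma D$ for every $D \ge \lambda\sqrt{n}$. Then for any $v \in S_D'(K')$, the definition of the LCD furnishes $\theta_v \in [D,2D]$ and $p_v \in \mathbb{Z}^n$ with $\snorm{\theta_v v - p_v}_2 \le \mu\sqrt{n}$, whence $\snorm{p_v}_2 \le 3D$. Applying \cref{lem:cube-covering} to the Euclidean ball of radius $3D$, and then \cref{lem:counting-points} to each parallelepiped in the resulting cover, bounds the admissible integer $p$'s by $(C_1 D/\sqrt{n})^n$. Taking $\theta'$ in the integers of $[D,2D]$ gives $O(D)$ further choices, and a routine computation using $\snorm{p_v}_2 \le 3D$ yields $\snorm{v - p_v/\theta'}_2 \le 2\mu\sqrt{n}/D$ for the ``coarse'' approximation $w_0 = p_v/\theta'$. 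On the event $\snorm{N} \le K'\sqrt{n}$ this already produces $\snorm{N(v-w_0)}_2 = O(\mu n/D)$, and in the subspace $(V^{*})^{\perp}$ (where $B$ has operator norm at most $K\sqrt{n}$) the corresponding component of $\snorm{B(v-w_0)}_2$ is also $O(K \mu n/D)$, automatically.

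The remaining obstruction is the $V^{*}$-component of $\snorm{B(v-w_0)}_2$, which can be arbitrarily large since $\snorm{B}$ is unbounded. To remedy this, for each $(p,\theta')$ I attach a family of refinements $w = w_0 + q$ with $q \in V^{*}$ from a discrete grid. The crucial geometric input is that $B$ restricted to $V^{*}$ is bi-Lipschitz with $\snorm{Bx}_2 \ge K\sqrt{n}\snorm{x}_2$, and $\snorm{Bv}_2 \le 2K'\sqrt{n}$ bounds the $V^{*}$-part of $v$ in a $\snorm{B}$-independent ellipsoid. By \cref{lem:set-reduction} the two-norm covering problem in the combined norm $(\snorm{Bx}_2^2 + K'^2 n\snorm{x}_2^2)^{1/2}$ reduces to box covering of an ellipsoid in $V^{*}$; \cref{lem:cube-covering} then gives at most $\mu^{-(1-\eta/2)n}$ such boxes (up to a constant to the $n$ absorbed into $C_{\ref{lem:metric-entropy-LCD}}$), each delivering the required $\mu n/D$ accuracy in $\snorm{(B+N)\cdot}_2$. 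One extra factor of $D$ is absorbed by an additional finer discretization of $\theta$ (beyond the integer grid) needed to tighten the error $p_v(\theta_v^{-1} - \theta'^{-1})$, producing the claimed $D^2$.

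The main obstacle is precisely the absence of any operator-norm bound on $B$: a direct $\ell_2$-net of $S_D'(K')$ gives no control whatsoever on $\snorm{B(v-w)}_2$, and so the Rudelson--Vershynin framework cannot be applied blindly. The innovation of the lemma is to split the domain into the small- and large-singular-value subspaces of $B$, handle the small subspace via the coarse LCD-based lattice net, and use the constraint $\snorm{Bv}_2 \le 2K'\sqrt{n}$ to confine the refinement in the $(1-\eta/2)n$-dimensional large-SV subspace to a bounded ellipsoid, so that its covering number is $\snorm{B}$-independent.
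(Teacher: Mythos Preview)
Your two-stage decomposition (coarse lattice net indexed by $(p,\theta')$, then independent refinement by $q\in V^*$) does not give the claimed cardinality; the counts do not multiply the way you assert. Concretely, with $w=w_0+q$ and $w_0=p/\theta'$, the refinement $q$ must approximate $v^*-w_0^*$ (the $V^*$-component of $v-w_0$), for which you only know $\snorm{v^*-w_0^*}_2\le 2\mu\sqrt{n}/D$. You have \emph{no} a priori bound on $\snorm{B(v^*-w_0^*)}_2$, because nothing constrains $Bw_0^*$. Covering the Euclidean ball of radius $2\mu\sqrt{n}/D$ in $V^*$ by sets of $\snorm{B\cdot}_2$-diameter $O(\mu n/D)$ costs $\prod_{i\le r}\Theta(\sigma_i/\sqrt{n})$ per direction, which is unbounded. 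If instead you discard $w_0^*$ and take $q$ from a net of the ellipsoid $\{x\in V^*:\snorm{Bx}_2\le 2K'\sqrt{n},\snorm{x}_2\le 1\}$ at the required $(B+N)$-scale $\mu n/D$, the covering number is $(CD/(\mu\sqrt{n}))^{r}$, not $\mu^{-r}$; multiplying by the coarse count $(CD/\sqrt{n})^n\cdot D$ overshoots the target by $(D/\sqrt{n})^{r}$, which is exponentially large for the $D$ up to $2^{\chi n}$ needed in the application. In short, your refinement count ``$\mu^{-(1-\eta/2)n}$'' is not what \cref{lem:cube-covering} actually yields here.

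The paper's proof repairs exactly this overcount by \emph{not} treating the lattice stage and the ellipsoid stage as an independent product. It first covers the ellipsoid $\mc{Q}=\{4K'^2n\snorm{x}_2^2+\snorm{Bx}_2^2\le 8K'^2n\}$ by axis-aligned boxes $\mc{C}'$ (Step~3), then for each such box counts only those lattice points $w_0$ with $(w_0+\mc{C})\cap(y_2+\mc{C}')\neq\emptyset$ (Step~4), and finally covers the \emph{intersection} $(w_0+\mc{C})\cap(y_2+\mc{C}')$ by translates of $\mu\sqrt{n}\mc{C}'/(4K'D)$ (Step~5). In directions $b_i$ with $\sigma_i$ large the box $\mc{C}'$ is narrow, which slashes the lattice count $N_1$; in directions with $\sigma_i$ small the cube $\mc{C}$ is already fine enough, which slashes the per-point covering $N_2$. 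The two savings telescope so that $N_1N_2\le \mu^{-k^*}(CD/\sqrt{n})^n$ with $k^*\le(1-\eta/2)n$. Your sketch is missing precisely this interlocking of the lattice constraint with the ellipsoid constraint; without it the bound is off by $(D/\sqrt{n})^{\dim V^*}$ and the subsequent union bound in the proof of \cref{thm:low-stable-rank} fails.
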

\begin{proof}
For lightness of notation, we will denote $S'_D(K')$ by $S'_D$. We denote the singular values of $B$, in decreasing order, by $\sigma_1(B),\dots, \sigma_n(B)$, and the corresponding unit right-singular vectors by $b_1,\ldots,b_n$. In particular, $b_1,\dots, b_n$ form an orthonormal basis of $\mb{R}^{n}$. Note that since $B$ is an $(n-1)\times n$ matrix, we must have $\sigma_n(B) = 0$, and so $b_n$ is in the kernel of $B$. The proof will make use of a few different geometric objects; we collect their definitions here for ease of reference:
\begin{itemize}
    \item $\mc{C}$ denotes the $n$-dimensional cube of width $\mu/D$ centered at $0$, whose axes are aligned with $b_1,\dots, b_n$.  
    \item $\mc{G}$ denotes the region given by $\{x \in \mb{B}_2^{n}: \snorm{Bx}_2\le 2K'\sqrt{n}\}$.
    \item $\mc{Q}$ denotes the ellipsoid given by $\{4K'^2n\snorm{x}_2^2+\snorm{Bx}_2^2\le 8K'^2n\}$.
    \item $\mc{C}'$ denotes the right-parallelepiped centered at $0$ whose axes are aligned with $b_1,\dots, b_n$, and whose width along $b_i$ is
    \[\sqrt{\frac{8K'^{2}}{\sigma_i(B)^{2} + 4K'^{2}n}};\]
    note that $\mc{C}' \subseteq \mc{Q}$.
    \item $\mc{C}''$ denotes the right-parallelepiped centered at $0$ whose axes are aligned with $b_1,\dots, b_n$, and whose width along $b_i$ is
    \[\sqrt{\frac{8K'^{2}}{\sigma_i(B)^{2} + 4K'^{2}n}} + \frac{1}{D};\]
    note that $\mc{C}' + \mu^{-1}\mc{C} \subseteq \mc{C}''$.
    \item $\mc{C}'''$ denotes the right-parallelepiped centered at $0$ whose axes are aligned with $b_1,\dots, b_n$, and whose width along $b_i$ is
    \[\frac{1}{\min(D, \max(\sigma_i(B), \sqrt{n}))}\]
\end{itemize}

\textbf{Step 1: }From the definition of LCD, it follows that $S_D$ admits a $(2\mu\sqrt{n}/D)$-net in Euclidean norm, formed by the points
\[\mc{P} = \bigg\{\frac{p}{\snorm{p}_2}: p\in(\mb{Z}^n \setminus \{0\})\cap\mb{B}_2^n(0,3D)\bigg\}= \bigg\{\frac{p}{\snorm{p}_2}: p\in(\mb{Z}^n \setminus \{0\}) \cap(\mb{B}_2^n(0,3D)\setminus\mb{B}_2^n(0,3D/2))\bigg\}.\]
For a concrete reference for the first equality, see \cite[Lemma~7.2]{Rud14}. For the second equality, simply note that for any $p \in (\mb{Z}^{n} \setminus \{0\}) \cap \mb{B}_2^{n}(0, 3D/2)$, there exists some $\ell_{p} \in \mb{Z}\setminus \{0\}$ such that $\ell_{p}\cdot p \in (\mb{Z}^{n} \setminus \{0\}) \cap (\mb{B}_{2}^{n}(0, 3D) \setminus \mb{B}_2^{n}(0, 3D/2))$. Since  $\ell_p \cdot p/\snorm{\ell_p \cdot p}_2 = p/\snorm{p}_2$, both $p$ and $\ell_p \cdot p$ map to the same point in $\mc{P}$, so that we may ignore the contribution of $p$ without any loss. 

Hence, we see that $S_D \subseteq \mc{P} + \mb{B}_{2}^{n}(0, 2\mu\sqrt{n}/D)$, so that 
\[S_D' \subseteq (\mc{P} + \mb{B}_{2}^{n}(0, 2\mu \sqrt{n}/D)) \cap \mc{G}.\]
In the remainder of the proof, we will cover the region on the right hand side by at most \[\mu^{-(1-\eta/2)n}D^{2}(CD/\sqrt{n})^{n}\] translates of 
\[\frac{\mu \sqrt{n}}{4K'D}\mc{Q};\]
this clearly suffices since on the event $\snorm{N} \le K'\sqrt{n}$, for any $v, w \in \mu\sqrt{n}\mc{Q}/4K'D$, we have $\snorm{(B+N)(v-w)}_2 \le \mu n/D$. 

\textbf{Step 2: }Since for any $p \in (\mb{Z}^{n}\setminus \{0\}) \cap \mb{B}_{2}^{n}(0, 3D)$, $\snorm{p}_2^{2} \in \mb{Z} \cap [1, 9D^{2}]$, it follows that by paying an overall multiplicative factor of $9D^{2}$ in the size of the final net, it suffices to fix $T \in [3D/2, 3D]$ and bound (uniformly in $T$) the number of translates of $\mu \sqrt{n} \mc{Q}/4K'D$ needed to cover the region
\[(\mc{P}_T + \mb{B}_{2}^{n}(0, 2\mu \sqrt{n}/D)) \cap \mc{G},\]
where
\[\mc{P}_T = \left(\frac{1}{T}\mb{Z}^n\right)\cap\mb{B}_2^n.\]
Moreover, since $\mb{B}_{2}^{n}(0, 2\mu \sqrt{n}/D)$ can be covered by $(2C_{\ref{lem:cube-covering}})^{n}$ translates of $\mc{C}$ (\cref{lem:cube-covering}), it suffices after paying a multiplicative factor of $(2C_{\ref{lem:cube-covering}})^{n}$ to bound (uniformly in $T$ and $y \in \mb{R}^{n}$) the number of translates of $\mu \sqrt{n}\mc{Q}/4K'D$ needed to cover the region
\[(y + \mc{P}_T + \mc{C}) \cap \mc{G}.\]
Moreover, by \cref{lem:set-reduction}, it suffices instead to consider the larger region
\[(y + \mc{P}_T + \mc{C}) \cap \mc{Q}.\]

Note that covering $\mc{Q}$ by translates of $\mu \sqrt{n} \mc{Q}/4K'D$ requires 
$\mu^{-n}(CD/\sqrt{n})^n$ translates, which is bigger by a factor of $\mu^{\eta n/2}$ compared to our desired conclusion; therefore, we must carefully exploit the first term in the intersection. 

\textbf{Step 3: }By noting that 
the matrix $(4K'^2nI + B^TB)/(8K'^2n)$ has unit right-eigenvectors $b_1,\dots, b_n$ with corresponding eigenvalues $(4K'^{2} n + \sigma_i(B)^{2})/8K'^{2}n$, it follows from \cref{lem:cube-covering} that $\mc{Q}$ can be covered by $C_{\ref{lem:cube-covering}}^{n}$ translates of $\mc{C}'$. 
Therefore, up to an overall multiplicative factor of $C_{\ref{lem:cube-covering}}^{n}$, it suffices to bound (uniformly in $T$, and $y_1, y_2 \in \mb{R}^{n}$) the number of translates of $\mu \sqrt{n} \mc{Q}/4K'D$ needed to cover the region 
\[(y_1 + \mc{P}_T + \mc{C}) \cap (y_2 + \mc{C}').\]
Moreover, since $\mc{C}'\subseteq \mc{Q}$, it suffices to cover by translates of $\mu \sqrt{n}\mc{C}'/4K'D$.
Below, we will need the following observation. Let 
\[k^* = \max\{i \in [n]: \sigma_i(B) \ge KD/\lambda\}.\]
Then, since $D \ge \lambda \sqrt{n}$ by assumption, and recalling the observation that $B$ has $K$-rank at most $(1-\eta/2)n$, it follows that 
$k^*\le (1-\eta/2)n$.

\textbf{Step 4: }We begin by bounding (from above) the number of points $w\in y_1+\mc{P}_T$ for which \[(w + \mc{C}) \cap (y_2 + \mc{C}') \neq \emptyset.\]  
Note that any such $w$ is of the form $y_1 + (z/T)$, where $z \in \mb{Z}^{n}$, and for the intersection above to be nonempty, we must have
\[(z/T) \in (y_2 - y_1) + \mc{C}' - \mc{C} \subseteq (y_2 - y_1) + \mc{C}''.\]

We claim that $\mc{C}'' \subseteq C\cdot \mc{C'''}$, where $C$ is a constant depending only on $K, K', \lambda$. Indeed, using the definition of $\mc{C}'', k^*$, and the assumption $D \ge \lambda \sqrt{n}$, we see that the width of $\mc{C}''$ in the directions $b_i$ for $i \le k^*$ is at most $C''/D$, whereas its width in the directions $b_i$ for $i > k^*$ is at most $C''/\max(\sigma_i(B), \sqrt{n})$, where $C''$ is a constant depending on $K, K', \lambda$.  

Hence, it suffices to bound the number of $z \in \mb{Z}^{n}$ such that $z \in T\cdot (y_2 - y_1) + T\cdot C\cdot \mc{C'''}$. Moreover, since $T \le 3D$, it suffices to bound (uniformly in $y \in \mb{R}^{n}$) the number of $z \in \mb{Z}^{n}$ such that $z \in y + 3CD\cdot \mc{C'''}$. Since for all sufficiently large $C$ (depending only on $K,K', \lambda$),
\[\frac{3CD}{\min(D, \max(\sqrt{n}, \sigma_i(B)))} \ge 1,\]
it follows from 
\cref{lem:counting-points} that the number of such $z \in \mb{Z}^{n}$ is at most 
\[N_1 := \frac{(CD)^n}{\prod_{i=1}^n(\min(D, \max(\sqrt{n}, \sigma_i(B))))},\]
where $C$ is a constant depending only on $K, K', \lambda$.

\textbf{Step 5: }Let us now fix $w_0 \in y_1 + \mc{P_T}$ and bound (uniformly in $y_1, y_2 \in \mb{R}^{n}$) the number of translates of $\mu \sqrt{n} \mc{C}'/4K'D$ needed to cover the region 
\[(w_0 + \mc{C}) \cap (y_2 + \mc{C}').\]
For this, note that for any $y_2 \in \mb{R}^{n}$, $(w_0 + \mc{C}) \cap (y_2 + \mc{C'})$ is the intersection of two right-parallelepipeds along the same axes $b_1,\dots,b_n$. In particular, in each direction $b_i$, the width of $(w_0 + \mc{C}) \cap (y_2 + \mc{C'})$ is bounded above by the minimum of the widths of $\mc{C}$ and $\mc{C}'$ along $b_i$. We will use the bound for the width coming from $\mc{C}'$ for the directions $b_i$, $i \le k^*$, and the bound for the width coming from $\mc{C}$ for the remaining directions. 

Thus, the width of $\mu \sqrt{n}\mc{C'}/4K'D$ is smaller than the width of $(w_0 + \mc{C}) \cap (y_2 + \mc{C}')$ by a factor of at most $\mu \sqrt{n}/4K'D$ in the directions $b_i$ for $i \le k^*$. Moreover, for $i > k^*$, the width of $(w_0 + \mc{C}) \cap (y_2 + \mc{C}')$ in direction $b_i$ is at most $\mu/D$, whereas the width of $\mu \sqrt{n} \mc{C'}/4K'D$ is at least $\mu \sqrt{n}/C'D\max(\sigma_i(B), \sqrt{n})$, where $C'$ is a constant depending only on $K, K', \lambda$; hence, in these directions, the width of $\mu \sqrt{n} \mc{C}'/4K'D$ is smaller by a factor of at most $\sqrt{n}/C'\max(\sigma_i(B), \sqrt{n})$. 

Therefore, we see that the number of translates of $\mu \sqrt{n}\mc{C}'/4K'D$ needed to cover $(w_0 + \mc{C}) \cap (y_2 + \mc{C}')$ is at most
\[N_2 := C^{n}\prod_{i=1}^{k^*}\left(\frac{D}{\mu \sqrt{n}}\right)\prod_{i = k^* + 1}^{n}\left(\frac{\max(\sigma_i(B), \sqrt{n})}{\sqrt{n}}\right),\]
where $C$ is a constant depending only on $K, K', \lambda$. 



\textbf{Step 6: }Noting that 
\[\prod_{i=1}^{n}\min(D, \max(\sqrt{n}, \sigma_i(B))) \ge (C')^{-n} D^{k^*}\prod_{k^* + 1}^{n}\max(\sigma_i(B), \sqrt{n}),\]
where $C'$ is a constant depending only on $K, K', \lambda$,
we see that
\[N_1 \cdot N_2 \le \mu^{-k^*}\left(\frac{C D}{\sqrt{n}}\right)^{n} \le \mu^{-(1-\eta/2)n}\left(\frac{C D}{\sqrt{n}}\right)^{n}.\]
Finally, recalling that $N_1 \cdot N_2$ is less than the size of actual number of translates by at most $C^{n}D^{2}$, where $C$ depends on $K, K', \lambda$, gives the desired conclusion. 
\end{proof}

We are now ready to prove \cref{thm:low-stable-rank}; the proof is essentially identical to the proof in \cite{RV08} except for one twist. 
\begin{proof}[Proof of \cref{thm:low-stable-rank}]
By \cref{lem:compressible-vectors} applied to $A+M$ and $(A+M)^{T}$, it follows that there exist $\delta_{c_{\ref{lem:compressible-vectors}}}, \rho_{c_{\ref{lem:compressible-vectors}}}, c_{\ref{lem:compressible-vectors}} > 0$ depending only on $\eta$, $K$, and the sub-Gaussian norm of $\xi$ such that except with probability $4e^{-c_{\ref{lem:compressible-vectors}}n}$, any left or right compressible vector has image with norm at least $c_{\ref{lem:compressible-vectors}}\sqrt{n}$.  
Then, by \cref{lem:invertibility-distance}, it suffices to provide a uniform (in $k \in [n]$) bound on $\mb{P}(\on{dist}(X_k, H_k) \le \epsilon)$, where $X_k$ denotes the $k$th row of $A+M$, and $H_k$ denotes the span of all the other rows of $A+M$. Henceforth, we will take $k=n$, noting that the argument for other values of $k$ is exactly the same. 

Let $v(\cdot)$ be a function mapping $(n-1)\times n$ matrices to an arbitrary unit vector in their right kernel. 
As before, let $B$ denote the $(n-1)\times n$ matrix formed by the top $n-1$ rows of $A$, and let $N$ denote the $(n-1)\times n$ matrix formed by the top $n-1$ rows of $M$. By \cref{lem:compressible-vectors}, it follows that, except with probability at most $2e^{-c_{\ref{lem:compressible-vectors}}n}$,  $v(B+N)$ must lie in $\on{Incomp}_{\delta_{\ref{lem:compressible-vectors}}, \rho_{\ref{lem:compressible-vectors}}}$. Therefore, by \cref{lem:lower-bound-LCD}, there exist some $\gamma, \lambda > 0$ (depending only on $\delta_{\ref{lem:compressible-vectors}}, \rho_{\ref{lem:compressible-vectors}}$) such that $\on{LCD}_{\alpha, \gamma}(v(B+N)) \ge \lambda \sqrt{n}$ for any $\alpha > 0$. We write $\alpha = \mu \sqrt{n}$, where $\mu > 0$ will be chosen later; it is important to note that none of the previously chosen parameters depend on $\mu$.  

Let $X_n$ denote the last row of $A + M$. Since $\on{dist}(X_n, H_n) \ge |\sang{v(B+N), X_n}|$, it suffices to bound
$$\mb{P}[|\sang{v(B+N), X_n}|\le \epsilon]. $$

First, we consider, for dyadically chosen $D\in[\lambda\sqrt{n},2^{\chi n}]$ (where $\chi > 0$ will be chosen at the end), the probability
\[\mb{P}[|\sang{v(B+N),X_n}|\le\epsilon\wedge v(B+N)\in S_D].\]
The key observation here is that this probability is at most
\[\mb{P}[|\sang{v(B+N),X_n}|\le\epsilon\wedge v(B+N)\in S'_D(K')] + \mb{P}[\snorm{N} \ge K'\sqrt{n}],\]
since for any $x \in S'_D(K')\setminus S_D$, on the event $\snorm{N}\le K'\sqrt{n}$, we have $\snorm{(B+N)x} \ge 2K'\sqrt{n} - K'\sqrt{n} \neq 0$.
Note that there exists $K' > 0$ depending only on the sub-Gaussian norm of $\xi$ for which the second term is exponentially small (cf. \cite[Lemma~2.4]{RV08}). We fix such a $K'$. 

Now, we bound the first term as follows. Let $\mc{N}$ denote the net for $S_D'(K')$ coming from \cref{lem:metric-entropy-LCD}. Then,
\begin{align*}
\mb{P}[|\sang{v(B+N),X_n}|&\le\epsilon\wedge v(B+N)\in S'_D(K')]\\
&\le \mb{P}[v(B+N) \in S_D'(K')]\\
&\le\sum_{w\in\mc{N}}\mb{P}[|(B+N)w|\le\mu n/D]\\
&\le\mu^{-(1-\eta/2)n}D^2\bigg(\frac{C_{\ref{lem:metric-entropy-LCD}}(\lambda, K, K')D}{\sqrt{n}}\bigg)^n\times\bigg(\frac{C\cdot C_{\ref{lem:levy-concentration-LCD}}(\gamma)\mu\sqrt{n}}{D}\bigg)^{n-1}, 
\end{align*}
which is exponentially small (here, $C$ is an absolute constant), as long as $\mu > 0$ is chosen to be sufficiently small, and then $\chi$ is chosen to be sufficiently small. In the last line, we have used \cref{lem:metric-entropy-LCD}, \cref{lem:levy-concentration-LCD} and a standard tensorization argument (cf. \cite[Lemma~2.2]{RV08}).

Finally, we note that by \cref{lem:invertibility-distance}, we have
\[\mb{P}[\sang{v(B+N),A_n+M_n}\le\epsilon\wedge\on{LCD}_{\mu\sqrt{n},\gamma}(v(B+N))\ge 2^{\chi n}]\le C_{\ref{lem:levy-concentration-LCD}}(\gamma)(\epsilon + 2^{-\chi n} + 2^{-c\mu^2n}),\]
which completes the proof. 
\end{proof}

\section{Proof of \texorpdfstring{\cref{thm:counterexample}}{Theorem 1.3}}\label{sec:counterexample}
Let $A,R_n$ be defined as in the statement of \cref{thm:counterexample}. For convenience, we will study the least singular value of $A-R_n$, which has the same distribution as $A+R_n$. Let $K\ge 1$ be sufficiently large so that the event $\mc{E}_K = \{\snorm{R_n}\le K\sqrt{n}\}$ satisfies
\[\mb{P}[\mc{E}_K]\ge 1-2^{-n}.\]
Finally, let $L\ge 2K\sqrt{n}$.

We first reduce the study of the smallest singular value to the study of anti-concentration of a very structured vector.
\begin{lemma}\label{lem:reduction}
Let $u,w\in\mb{R}^{n-1}$ have i.i.d coordinates distributed as lazy Rademacher random variables. Then, we have
\[\frac{1}{2}\mb{P}\bigg[\bigg|\sum_{i\ge 0}w^T(R_{n-1}/L)^iu\bigg|\le L\epsilon\cap\snorm{R_{n-1}}\le K\sqrt{n}\bigg]\le\mb{P}[s_n(A-R_n)\le\epsilon]+\mb{P}[\mc{E}_K^c].\]
\end{lemma}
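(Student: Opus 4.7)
The plan is to perform a block decomposition of $R_n$ along its last row and column, compute the $(n,n)$ entry of $(A-R_n)^{-1}$ via the Schur complement formula, and then freeze the bottom right-diagonal entry $r_{nn}$ at $0$ (which happens with probability $1/2$).

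First I would write
\[R_n = \begin{pmatrix} R_{n-1} & u \\ w^T & r_{nn}\end{pmatrix},\qquad A - R_n = \begin{pmatrix} LI_{n-1} - R_{n-1} & -u \\ -w^T & -r_{nn}\end{pmatrix},\]
where $R_{n-1}, u, w, r_{nn}$ are mutually independent since all entries of $R_n$ are i.i.d.\ lazy Rademachers. On the event $\{\snorm{R_{n-1}}\le K\sqrt{n}\}$ the hypothesis $L \ge 2K\sqrt{n}$ forces $\snorm{R_{n-1}/L}\le 1/2$, so $LI_{n-1}-R_{n-1}$ is invertible with convergent Neumann series $(LI_{n-1}-R_{n-1})^{-1} = L^{-1}\sum_{i\ge 0}(R_{n-1}/L)^i$. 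Setting $S := \sum_{i\ge 0}w^T(R_{n-1}/L)^i u$, we then have $w^T(LI_{n-1}-R_{n-1})^{-1}u = S/L$.

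Next, by the Schur complement formula, whenever $A-R_n$ is invertible its $(n,n)$ entry equals $-(r_{nn}+w^T(LI_{n-1}-R_{n-1})^{-1}u)^{-1} = -L/(Lr_{nn}+S)$. Since $s_n(A-R_n) = \snorm{(A-R_n)^{-1}}^{-1} \le |((A-R_n)^{-1})_{nn}|^{-1}$, this yields
\[s_n(A-R_n) \le |r_{nn} + S/L|,\]
and the same bound holds trivially (as $s_n=0$) when $A-R_n$ is singular. In particular, on the event $\{|S|\le L\epsilon\}\cap\{\snorm{R_{n-1}}\le K\sqrt{n}\}\cap\{r_{nn}=0\}$, we have $s_n(A-R_n)\le |S|/L\le\epsilon$, so this event is contained in $\{s_n(A-R_n)\le \epsilon\}$.

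Finally, using that $r_{nn}$ is independent of $(R_{n-1},u,w)$ and that $\mb{P}[r_{nn}=0]=1/2$,
\[\mb{P}\bigl[|S|\le L\epsilon\cap\snorm{R_{n-1}}\le K\sqrt{n}\bigr] = 2\,\mb{P}\bigl[|S|\le L\epsilon\cap\snorm{R_{n-1}}\le K\sqrt{n}\cap r_{nn}=0\bigr] \le 2\,\mb{P}[s_n(A-R_n)\le\epsilon].\]
Dividing by $2$ and using $\mb{P}[\mc{E}_K^c]\ge 0$ gives exactly the claimed inequality. I do not anticipate any real obstacle: the lemma is essentially a formal consequence of the Schur-complement inversion identity combined with the $2$-divisibility of lazy Rademachers at $0$, and it reduces the anti-concentration of $s_n(A-R_n)$ to the anti-concentration of the single polynomial $S$ in the i.i.d.\ lazy Rademacher coordinates of $(R_{n-1},u,w)$, which is the object that the rest of Section~\ref{sec:counterexample} analyzes (e.g.\ via a local CLT / generating function argument, exploiting the geometric-series truncation).
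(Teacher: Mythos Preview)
Your proposal is correct and follows essentially the same route as the paper: block-decompose $R_n$, use the Neumann series for $(LI_{n-1}-R_{n-1})^{-1}$ on $\{\snorm{R_{n-1}}\le K\sqrt{n}\}$, and bound $s_n(A-R_n)$ by $|r_{nn}+S/L|$, then freeze $r_{nn}=0$. The only cosmetic difference is that the paper produces the bound via an explicit test vector $v=((LI_{n-1}-R_{n-1})^{-1}u,1)^T$ with $\snorm{v}_2\ge 1$, whereas you invoke the Schur complement formula for $((A-R_n)^{-1})_{nn}$; these are two phrasings of the same computation (indeed your observation that $\mb{P}[\mc{E}_K^c]$ is not actually needed is correct---the paper includes it somewhat gratuitously).
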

\begin{proof}
Since we have the term $\mb{P}[\mc{E}_K^{c}]$ on the right hand side, we may henceforth restrict ourselves to the event $\snorm{R_{n-1}} \le \snorm{R_{n}} \le K \sqrt{n}$. On this event, note that for any $L \ge 2K\sqrt{n}$, the $(n-1)\times (n-1)$ random matrix $LI_{n-1} - R_{n-1}$ is invertible, and in fact, its inverse is expressible as a Neumann series, i.e.
$$(LI_{n-1} - R_{n-1})^{-1} = \sum_{i\ge 0}R_{n-1}^{i}/L^{i+1}.$$
The key is to decompose $R_n$ as 
\[
R_{n}=\left(\begin{array}{cc}
R_{n-1} & u\\
w^{T} & r_{nn}
\end{array}\right);
\]
note that the four block matrices appearing in the decomposition are independent of each other. Therefore, for
\[
v=\left(\begin{array}{c}
(LI_{n-1}-R_{n-1})^{-1}u\\
1
\end{array}\right),
\]
we have trivially (from the last coordinate) that $\snorm{v}_2 \ge 1$, and hence,
\begin{align*}
    s_n(A - R_n) 
    &\le \snorm{(A-R_n)v}\\
    &\le |w^{T}(L-R_{n-1})^{-1}u + r_{nn}|\\
    &= |w^{T}(L-R_{n-1})^{-1}u|,
\end{align*}
where the final equality holds on the event $r_{nn} = 0$. 

Finally, writing $(L-R_{n-1})^{-1}u = L^{-1}\sum_{i\ge 0}(R_{n-1}/L)^{i}u$, we have



\begin{align*}
\frac{1}{2}\mb{P}\bigg[\bigg|\sum_{i\ge 0}w^T(R_{n-1}/L)^iu\bigg|&\le L\epsilon\cap\{\snorm{R_{n-1}}\le K\sqrt{n}\}\bigg]\\
&= \mb{P}\bigg[\bigg|\sum_{i\ge 0}w^T(R_{n-1}/L)^iu\bigg|\le L\epsilon\cap\{\snorm{R_{n-1}}\le K\sqrt{n}\}\cap \{r_{nn} = 0\}\bigg]\\
&\le\mb{P}\bigg[\bigg|\sum_{i\ge 0}w^T(R_{n-1}/L)^iu\bigg|\le L\epsilon\cap\mc{E}_K\cap \{r_{nn} = 0\}\bigg]+\mb{P}[\mc{E}_K^c]\\
&\le\mb{P}[s_n(A-R_n)\le\epsilon]+\mb{P}[\mc{E}_K^c]. \qedhere
\end{align*}
\end{proof}
We are now in position to prove \cref{thm:counterexample}.
\begin{proof}[Proof of \cref{thm:counterexample}]
Fix an integer $t\ge 1$, and let $C > 0$ be an absolute constant to be chosen later. Let 
$$\mc{G}_K = \{Q \in \mb{R}^{(n-1)\times (n-1)} : \snorm{Q} \le K\sqrt{n}\}.$$

\textbf{Step 1: }For independent $R_{n-1}, u, w$, consider the event
$$\mc{E} =  \bigcap_{i=0}^{t-2}\{w^TR_{n-1}^iu = 0\} \cap \left\{\bigg|w^T\bigg(\sum_{i\ge t-1}(R_{n-1}/L)^i\bigg)u\bigg|\le\frac{CK^tn^{t/2}}{L^{t-1}}\right\}.$$
Then, by \cref{lem:reduction} and the bound on $\mb{P}[\mc{E}_K]$, we have
\begin{align*}
\mb{P}\bigg[s_n(A-R_n)&\le C\bigg(\frac{K\sqrt{n}}{L}\bigg)^t\bigg]\\
&\ge\frac{1}{2}\mb{P}\bigg[\bigg|\sum_{i\ge 0}w^T(R_{n-1}/L)^iu\bigg|\le\frac{CK^{t-1}n^{t/2}}{L^{t-1}}\cap\{\snorm{R_{n-1}}\le K\sqrt{n}\}\bigg]-2^{-n}\\
&\ge\frac{1}{2}\mb{P}[\mc{E} \cap \{R_{n-1} \in \mc{G}_K\}]-2^{-n}.
\end{align*}

\textbf{Step 2: }We write $u = u_1-u_2$, where $u_1,u_2\in\mb{R}^n$ are independent random vectors with i.i.d coordinates distributed as $\on{Ber}(1/2)-1/2$. For $j\in\{1,2\}$, let $\mc{E}_j$ be the event 
$$\mc{E}_j =  \bigcap_{i=0}^{t-2}\{|w^{T}R^{i}_{n-1}u_j| \le C K^{i}(\log{2t})n^{(i+1)/2}\}\cap \left\{\bigg|w^T\bigg(\sum_{i\ge t-1}(R_{n-1}/L)^i\bigg)u_j\bigg|\le\frac{CK^{t-1}n^{t/2}}{2L^{t-1}}\right\}. $$
On the event $\{R_{n-1} \in \mc{G}_K\}$, we have $\snorm{R_{n-1}^{i}} \le (K\sqrt{n})^{i}$, so that $\snorm{R_{n-1}^{i}}_{\on{HS}} \le K^{i}n^{(i+1)/2}$. 
Therefore, on this event, for any $L \ge 2K\sqrt{n}$, we have 
\[\norm{\sum_{i\ge t-1}(R_{n-1}/L)^i}_{\on{HS}}\le\frac{2K^{t-1}n^{t/2}}{L^{t-1}}.\]

\textbf{Step 3: }For independent random vectors $u,w$ as above, and a fixed $(n-1)\times (n-1)$ matrix $Q$, consider the random quadratic polynomial $w^{T} Q u$. It follows from standard hypercontractive estimates (cf. \cite[Theorem~10.24]{OD14}), that there exists an absolute constant $c > 0$ such that for all $x \ge 0$,
\[\mb{P}[|w^TQu|\ge x\snorm{Q}_{\on{HS}}]\le c^{-1}\exp(-cx).\]
From this, the Hilbert-Schmidt norm bounds in the previous step, and the union bound, it follows that for $j \in \{1,2\}$, and for any $R_{n-1} \in \mc{G}_K$, we have
$$\mb{P}_{u_j, w}[\mc{E}_j(u_j, w, R_{n-1})] \ge \frac{1}{2},$$
provided that $C > 0$ is chosen sufficiently large. 

Fix $R_{n-1} \in \mc{G}_K$. For $j \in \{1,2\}$, let $W_j(R_{n-1})$ be the set of those vectors $w \in \{-1,0,1\}^{n-1}$ for which 
$$\mb{P}_{u_j}[\mc{E}_j(u_j, w, R_{n-1})] \ge \frac{1}{4}.$$
Note that since $u_1$ and $u_2$ are identically distributed, $W_1(R_{n-1}) = W_2(R_{n-1}) =: W(R_{n-1})$. Then, by combining the conclusion of the previous step with averaging (reverse Markov's inequality), it follows that
$$\mb{P}[w \in W(R_{n-1})] \ge \frac{1}{4}.$$

\textbf{Step 4: }Fix $R_{n-1} \in \mc{G}_K$ and $w \in W(R_{n-1})$. Then, we have
\begin{align*}
    \mb{P}_{u}[\mc{E}(u,w,R_{n-1})]
    &\ge \mb{P}_{u_1, u_2}[\mc{E}(u_1 - u_2, w, R_{n-1})\cap \mc{E}_1(u_1, w, R_{n-1}) \cap \mc{E}_2(u_2, w, R_{n-1})]\\
    &= \mb{P}_{u_1, u_2}[\mc{E}(u_1 - u_2, w, R_{n-1}) \mid \mc{E}_1(u_1, w, R_{n-1}) \cap \mc{E}_2(u_2, w, R_{n-1})] \times\\
    & \quad \times \mb{P}_{u_1}[\mc{E}_1(u_1, w, R_{n-1})]\mb{P}_{u_2}[\mc{E}(u_2, w, R_{n-1})]\\
    &\ge \frac{1}{16}\mb{P}_{u_1, u_2}[\mc{E}(u_1 - u_2, w, R_{n-1}) \mid \mc{E}_1(u_1, w, R_{n-1}) \cap \mc{E}_2(u_2, w, R_{n-1})],
\end{align*}
where the final inequality uses that $w \in W(R_{n-1})$. 

Let us now bound from below the very last term. Conditioned on the event $\mc{E}_{1}(u_1, w, R_{n-1}) \cap \mc{E}_{2}(u_2, w, R_{n-1})$, we know that for $j \in \{1,2\}$, $$\bigg|w^T\bigg(\sum_{i\ge t-1}(R_{n-1}/L)^i\bigg)u_j\bigg|\le\frac{CK^{t-1}n^{t/2}}{2L^{t-1}},$$
so that by the triangle inequality,
$$\bigg|w^T\bigg(\sum_{i\ge t-1}(R_{n-1}/L)^i\bigg)u\bigg|\le\frac{CK^{t-1}n^{t/2}}{2L^{t-1}}.$$
Moreover, conditioned on the event $\mc{E}_{1}(u_1, w, R_{n-1}) \cap \mc{E}_2(u_2, w, R_{n-1})$, we know that the $(t-1)$-dimensional vectors
$$(w^{T} R_{i}^{n-1} u_j)_{0\le i \le t-2}$$
are i.i.d for $j \in \{1,2\}$, have each coordinate equal to a half-integer, and lie in a $(t-1)$-dimensional region with total number of points in $(\mb{Z}/2)^{(t-1)}$ at most
\[4^{t-1}C^{t-1}K^{\frac{(t-1)(t-2)}{2}}(\log 2t)^{t-1}n^{\frac{t(t-1)}{4}}.\]
In particular, by Cauchy-Schwarz, we see that these two $(t-1)$-dimensional vectors coincide (conditioned on $\mc{E}_1(u_1, w, R_{n-1})\cap \mc{E}_2 (u_2, w, R_{n-1})$ is at least
$$(4^{t-1}C^{t-1}K^{\frac{(t-1)(t-2)}{2}}(\log 2t)^{t-1}n^{\frac{t(t-1)}{4}})^{-1}.$$
But whenever this happens, we must also have
$$\bigcap_{i=0}^{t-2}\{w^TR_{n-1}^iu = 0\}.$$
To summarize, we have shown that for any $R_{n-1} \in \mc{G}_K$ and for any $w \in W(R_{n-1})$,
$$\mb{P}_{u}[\mc{E}(u,w,R_{n-1})] \ge 4^{-t-3}C^{1-t}K^{-\frac{(t-1)(t-2)}{2}}(\log 2t)^{1-t}n^{-\frac{t(t-1)}{4}}.$$

\textbf{Step 5: }The desired lower bound now follows easily. We have
\begin{align*}
    \mb{P}_{u,w,R_{n-1}}[\mc{E}\cap \{R_{n-1} \in \mc{G}_K\}]
    &\ge \mb{P}_{u,w}[\mc{E}(u,w,R_{n-1})|R_{n-1} \in \mc{G}_K](1-2^{-n})\\
    &\ge \mb{P}_{u,w}[\mc{E}(u,w,R_{n-1})\cap \{w \in W(R_{n-1})\} | R_{n-1} \in \mc{G}_K](1-2^{-n})\\
    &\ge \mb{P}_{u}[\mc{E}(u,w,R_{n-1})| \{w \in W(R_{n-1})\} \cap \{R_{n-1} \in \mc{G}_K\} ]\times\\
    &\quad \times \mb{P}_{w}[w\in W(R_{n-1}) | R_{n-1} \in \mc{G}_K](1-2^{-n})\\
    &\ge \frac{(1-2^{-n})}{4}\mb{P}_{u}[\mc{E}(u,w,R_{n-1})| \{w \in W(R_{n-1})\} \cap \{R_{n-1} \in \mc{G}_K\} ]\\
    &\ge \frac{(1-2^{-n})}{4}4^{-t-3}C^{1-t}K^{-\frac{(t-1)(t-2)}{2}}(\log 2t)^{1-t}n^{-\frac{t(t-1)}{4}},
\end{align*}
where the final inequality follows from the end of Step 4. Combining this with Step 1 completes the proof. 
\end{proof}

\bibliographystyle{amsplain0.bst}
\bibliography{main.bib}

\end{document}